\renewcommand{\thesection}{\arabic{section}}
\titleformat{\section}{\Large\bf\boldmath}{\thesection.}{2ex}{}{}
\titlespacing{\section}{0ex}{2ex}{1ex}
\renewcommand{\thesubsection}{\arabic{section}.\arabic{subsection}}
\titleformat{\subsection}{\large\bf\boldmath}{\thesubsection.}{2ex}{}{}
\titlespacing{\section}{0ex}{1.5ex}{0.5ex}
\numberwithin{equation}{section}
{\theoremstyle{definition}\newtheorem{definition}{Definition}[section]

\newtheorem{remark}[definition]{Remark}
\newtheorem{remarkletter}{Remark}
}
\newtheorem{lemma}[definition]{Lemma}
\newtheorem{theorem}[definition]{Theorem}
\newtheorem{theoremletter}[remarkletter]{Theorem}
\newtheorem{propositionletter}[remarkletter]{Proposition}
\newtheorem{corollaryletter}[remarkletter]{Corollary}
\newcommand{\M}{\operatorname{M}}
\newcommand{\C}{\mathbb{C}}
\newcommand{\F}{\mathbb{F}}
\newcommand{\cR}{\mathcal{R}}
\newcommand{\actson}{\curvearrowright}
\newcommand{\SL}{\operatorname{SL}}
\newcommand{\Z}{\mathbb{Z}}
\newcommand{\id}{\mathord{\operatorname{id}}}
\newcommand{\cE}{\mathcal{E}}
\newcommand{\recht}{\rightarrow}
\newcommand{\cU}{\mathcal{U}}
\newcommand{\vphi}{\varphi}
\newcommand{\R}{\mathbb{R}}
\newcommand{\al}{\alpha}
\newcommand{\eps}{\varepsilon}
\newcommand{\Tr}{\operatorname{Tr}}
\newcommand{\ovt}{\mathbin{\overline{\otimes}}}
\newcommand{\om}{\omega}
\newcommand{\cZ}{\mathcal{Z}}
\newcommand{\cK}{\mathcal{K}}
\newcommand{\cH}{\mathcal{H}}
\newcommand{\ot}{\otimes}
\newcommand{\cL}{\mathcal{L}}
\newcommand{\cG}{\mathcal{G}}
\newcommand{\cM}{\mathcal{M}}
\newcommand{\dpr}{^{\prime\prime}}
\newcommand{\be}{\beta}
\newcommand{\Mtil}{\widetilde{M}}
\newcommand{\cN}{\mathcal{N}}
\newcommand{\cS}{\mathcal{S}}
\newcommand{\Om}{\Omega}
\newcommand{\supp}{\operatorname{supp}}
\newcommand{\HNN}{\operatorname{HNN}}
\newcommand{\cC}{\mathcal{C}}
\newcommand{\cI}{\mathcal{I}}
\newcommand{\cT}{\mathcal{T}}
\newcommand{\closure}[1]{\operatorname{closure}(#1)}
\begin{document}

\begin{center}
{\boldmath\LARGE\bf  Normalizers inside amalgamated\vspace{0.5ex}\\ free product von Neumann algebras}
\bigskip

{\sc by Stefaan Vaes\footnote{KU~Leuven, Department of Mathematics, Leuven (Belgium), stefaan.vaes@wis.kuleuven.be \\
    Supported by ERC Starting Grant VNALG-200749, Research
    Programme G.0639.11 of the Research Foundation~--
    Flanders (FWO) and KU~Leuven BOF research grant OT/08/032.}}
\end{center}

\medskip

\begin{abstract}\noindent
Recently, Adrian Ioana proved that all crossed products $L^\infty(X) \rtimes (\Gamma_1*\Gamma_2)$ by free ergodic probability measure preserving actions of a nontrivial free product group $\Gamma_1 * \Gamma_2$ have a unique Cartan subalgebra up to unitary conjugacy. Ioana deduced this result from a more general dichotomy theorem on the normalizer $\cN_M(A)\dpr$ of an amenable subalgebra $A$ of an amalgamated free product von Neumann algebra $M = M_1 *_B M_2$. We improve this dichotomy theorem by removing the spectral gap assumptions and obtain in particular a simpler proof for the uniqueness of the Cartan subalgebra in $L^\infty(X) \rtimes (\Gamma_1*\Gamma_2)$.
\end{abstract}

\section{Introduction and main results}

All free ergodic nonsingular group actions $\Gamma \actson (X,\mu)$ on standard probability spaces give rise to a crossed product von Neumann algebra $L^\infty(X) \rtimes \Gamma$, in which $L^\infty(X)$ is a \emph{Cartan subalgebra.} More generally, Cartan subalgebras arise as $L^\infty(X) \subset L(\cR)$ where $\cR$ is a countable nonsingular Borel equivalence relation on $(X,\mu)$. One of the main questions in the classification of these von Neumann algebras $L^\infty(X) \rtimes \Gamma$ and $L(\cR)$ is whether or not $L^\infty(X)$ is their unique Cartan subalgebra up to unitary conjugacy. Indeed, if uniqueness holds, the classification problem is reduced to classifying the underlying (orbit) equivalence relations.

Within Popa's deformation/rigidity theory, there has been a lot of recent progress on the uniqueness of Cartan subalgebras in II$_1$ factors, starting with \cite{OP07} where it was shown that all crossed products $L^\infty(X) \rtimes \F_n$ by free ergodic probability measure preserving (pmp) \emph{profinite} actions of the free groups $\F_n$ have a unique Cartan subalgebra. Note that this provided the first uniqueness theorem for Cartan subalgebras up to unitary conjugacy. The result of \cite{OP07} was gradually extended to profinite actions of larger classes of groups $\Gamma$ in \cite{OP08,CS11,CSU11}, but all relied on profiniteness of the action and weak amenability of the group $\Gamma$. At the same time, it was conjectured that crossed products $L^\infty(X) \rtimes \F_n$ by actions of the free groups could have a unique Cartan subalgebra without any profiniteness assumptions on $\F_n \actson (X,\mu)$.

In a joint work with Popa in \cite{PV11,PV12}, we solved this conjecture and proved that the free groups $\Gamma=\F_n$ and all nonelementary hyperbolic groups $\Gamma$ are $\cC$-rigid (Cartan-rigid), i.e.\ for every free ergodic pmp action $\Gamma \actson (X,\mu)$, the II$_1$ factor $L^\infty(X) \rtimes \Gamma$ has a unique Cartan subalgebra up to unitary conjugacy. We obtained this result as a consequence of a general dichotomy theorem about normalizers of amenable subalgebras in crossed product von Neumann algebras $N\rtimes \Gamma$, arising from trace preserving actions of such groups $\Gamma$ on arbitrary tracial $(N, \tau)$.

Then in \cite{Io12}, the general dichotomy result of \cite{PV11} has been exploited to establish $\cC$-rigidity for arbitrary nontrivial free products $\Gamma=\Gamma_1 * \Gamma_2$ and large classes of amalgamated free products $\Gamma = \Gamma_1 *_\Sigma \Gamma_2$. This provided in particular the first non weakly amenable $\cC$-rigid groups. The main idea of \cite{Io12} is to use the free malleable deformation from \cite{IPP05} of a crossed product $B \rtimes (\Gamma_1 * \Gamma_2)$, providing a $1$-parameter family of embeddings $\theta_t : B \rtimes (\Gamma_1 * \Gamma_2) \recht N \rtimes \F_2$ into some crossed product by the free group $\F_2$. Then the main result of \cite{PV11} is applied to this crossed product $N \rtimes \F_2$ and a very careful and delicate analysis is needed to ``come back'' and deduce results about the original crossed product $B \rtimes (\Gamma_1 * \Gamma_2)$.

The purpose of this article is to give a simpler approach to this ``come back'' procedure and, at the same, prove a more general result removing the spectral gap assumptions in \cite{Io12}. As a result, we obtain a simpler proof for the $\cC$-rigidity of amalgamated free product groups.

Our method allows to prove a more generic theorem about the normalizer of a subalgebra inside an amalgamated free product of von Neumann algebras, see Theorem \ref{thm.AFP} below. This theorem has the advantage to immediately imply a similar result for HNN extensions of von Neumann algebras, see Theorem \ref{thm.HNN}. As such we obtain with the same effort $\cC$-rigidity for a large class of HNN extensions $\Gamma = \HNN(\Gamma_1,\Sigma,\theta)$, established before in \cite{DI12} using more involved methods.

As we explain below, following the strategy of \cite{HV12}, we also prove a uniqueness theorem of Cartan subalgebras in type III factors. This then allows us to give the first examples of type III actions $\Gamma \actson (X,\mu)$ that are W$^*$-superrigid, i.e.\ such that the group $\Gamma$ and its action $\Gamma \actson (X,\mu)$ can be recovered from $L^\infty(X) \rtimes \Gamma$, up to induction of actions.

To state the main result of the article, we first recall Popa's theory of \emph{intertwining-by-bimodules} from \cite{Po01,Po03}. When $(M,\tau)$ is a tracial von Neumann algebra and $A \subset pMp$, $B \subset M$ are von Neumann subalgebras, we say that $A \prec_M B$ ($A$ embeds into $B$ inside $M$) if $L^2(pM)$ admits a non-zero $A$-$B$-subbimodule that is finitely generated as a right Hilbert $B$-module. This is ``almost'' equivalent with the existence of a partial isometry $v \in B$ such that $v A v^* \subset B$. By \cite[Theorem 2.1 and Corollary 2.3]{Po03}, the negation $A \not\prec_M B$ is equivalent with the existence of a net of unitaries $(a_i)_{i \in I}$ in $\cU(A)$ satisfying $\lim_i \|E_B(xu_i y)\|_2 = 0$ for all $x,y \in M$.

Also recall from \cite[Definition 2.2]{OP07} that $A$ is said to be \emph{amenable relative to $B$ inside $M$} if there exists an $A$-central state $\Omega$ on Jones' basic construction von Neumann algebra $p \langle M,e_B \rangle p$ satisfying $\Om(x) = \tau(x)$ for all $x \in pMp$. When $B$ is amenable, this is equivalent to $A$ being amenable. When $M = D \rtimes \Gamma$ and $\Lambda, \Sigma < \Gamma$ are subgroups, then the relative amenability of $D \rtimes \Lambda$ w.r.t.\ $D \rtimes \Sigma$ is equivalent with the relative amenability of $\Lambda$ w.r.t.\ $\Sigma$ inside $\Gamma$, i.e.\ with the existence of a $\Lambda$-invariant mean on $\Gamma/\Sigma$.

The following is the main result of the article. The same result was proven in \cite[Theorem 1.6]{Io12} under the additional assumption that the normalizer $\cN_{pMp}(A) = \{u \in \cU(pMp) \mid u A u^* = A \}$ of $A$ inside $pMp$ has spectral gap.

\begin{theoremletter}\label{thm.AFP}
Let $M = M_1 *_B M_2$ be the amalgamated free product of the tracial von Neumann algebras $(M_i,\tau)$ with common von Neumann subalgebra $B \subset M_i$ w.r.t.\ the unique trace preserving conditional expectations. Let $p \in M$ be a nonzero projection and $A \subset pMp$ a von Neumann subalgebra that is amenable relative to one of the $M_i$ inside $M$. Then at least one of the following statements holds.
\begin{itemize}
\item $A \prec_M B$.
\item There is an $i \in \{1,2\}$ such that $\cN_{pMp}(A)\dpr \prec_M M_i$.
\item We have that $\cN_{pMp}(A)\dpr$ is amenable relative to $B$ inside $M$.
\end{itemize}
\end{theoremletter}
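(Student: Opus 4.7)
The plan is to follow the framework of \cite{Io12}: use the free malleable deformation of \cite{IPP05} to embed $M$ into a larger tracial von Neumann algebra $\widetilde{M}$ and then apply the dichotomy for amenable subalgebras inside free-group crossed products from \cite{PV11,PV12}. The improvement over \cite{Io12} should lie in the final ``transfer back'' step, which must be carried out without any spectral gap assumption on $\cN_{pMp}(A)$.

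Concretely, I would set $\widetilde{B} = B \ovt \rL(\F_\infty)$, $\widetilde{M}_i = M_i *_B \widetilde{B}$, and $\widetilde{M} = \widetilde{M}_1 *_B \widetilde{M}_2$. Free Haar generators of $\rL(\F_\infty)$ give rise to a one-parameter family of unitaries $(u_t)_{t \in \R}$ in $\widetilde{B}$ and hence to the IPP free malleable deformation $\theta_t : M \recht \widetilde{M}$, which satisfies $\theta_0 = \id$, is trace-preserving, and fixes $B$ pointwise. The second preparatory step is to realize a suitable amplification or corner of $\widetilde{M}$ as a crossed product $\widetilde{N} \rtimes \F_2$ for some tracial $\widetilde{N}$ with $M \subset \widetilde{N}$, in such a way that relative amenability of $A$ with respect to $M_i$ inside $M$ promotes to relative amenability of $A$ with respect to $\widetilde{N}$ inside $\widetilde{N} \rtimes \F_2$. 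With this setup in place, the main result of \cite{PV11,PV12} yields a first dichotomy: either $A \prec_{\widetilde{M}} \widetilde{N}$, or $\cN_{p\widetilde{M}p}(A)\dpr$ is amenable relative to $\widetilde{N}$ inside $\widetilde{M}$.

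Transferring these alternatives back to $M$ is the main task, and I expect it to be the crux of the proof. In the relative-amenability alternative, the specific structure of $\widetilde{N}$ as a free amalgamation of $B$ with a commuting algebra should allow one to descend to relative amenability of $\cN_{pMp}(A)\dpr$ over $B$ in $M$, which is the third bullet of the theorem. In the intertwining alternative $A \prec_{\widetilde{M}} \widetilde{N}$, I would exploit that $\theta_t \recht \id$ as $t \recht 0$: this forces either $A \prec_M B$ (the first bullet) or, after a careful averaging and localization in the free product structure, produces a corner of $A$ that intertwines into one of the $M_i$, from which a normalizer argument delivers $\cN_{pMp}(A)\dpr \prec_M M_i$ (the second bullet). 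In \cite{Io12}, this last step was handled by invoking spectral gap of the normalizer to upgrade an approximate intertwining of $A$ to an honest intertwining of $\cN_{pMp}(A)\dpr$. The new ingredient would be a more direct argument, using the relative amenability of $A$ together with the amalgamated free product structure of $M$, that produces the intertwining of the normalizer without any spectral gap input. Bypassing the spectral gap hypothesis in this final step is the principal obstacle I anticipate.
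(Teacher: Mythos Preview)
Your overall framework is right---use the IPP malleable deformation, realize $\widetilde{M}$ as a crossed product $N \rtimes \F_2$, and invoke the dichotomy of \cite{PV11}---but there are two genuine problems in how you deploy it.

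First, you apply the \cite{PV11} dichotomy to $A$ itself. This is vacuous: in the crossed-product picture $\widetilde{M} = N \rtimes \F_2$ one has $M \subset N$ (the algebra $N$ is generated by all the $\F_2$-translates $u_g M u_g^*$), so $A \subset pMp \subset N$ and the alternative ``$A \prec_{\widetilde{M}} N$'' holds trivially. The whole point of the deformation is that one must apply \cite{PV11} to $\theta_t(A)$ for each fixed $t \in (0,1)$. One first checks that $\theta_t(A)$ is amenable relative to $N$ (because $\theta_t(M_i)$ is unitarily conjugate to $M_i \subset N$), and then \cite{PV11} gives: either $\theta_t(A) \prec_{\widetilde{M}} N$, or $\theta_t(P)$ is amenable relative to $N$, where $P = \cN_{pMp}(A)''$. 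The actual case split is therefore: either the first alternative holds for \emph{some} $t$, or the second holds for \emph{all} $t$.

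Second, you have the two branches reversed as to where the difficulty lies. The intertwining branch ``$\theta_t(A) \prec_{\widetilde{M}} N$ for some $t$'' was already handled in \cite{Io12} \emph{without} any spectral gap (this is \cite[Theorem 3.2]{Io12}, reproduced here as Theorem \ref{thm.embed}): a random-walk estimate on $\F_2$ combined with the IPP criterion (Theorem \ref{thm.IPP}) yields $A \prec_M B$ or $P \prec_M M_i$. The spectral gap assumption in \cite{Io12} was used in the \emph{other} branch, to pass from ``$\theta_t(P)$ is amenable relative to $N$ for all $t$'' back to a statement about $P$ inside $M$. The new ingredient in this paper is precisely Theorem \ref{thm.main-tech}, which carries out that transfer with no spectral gap hypothesis and concludes that either $P \prec_M M_i$ or $P$ is amenable relative to $B$. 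So the obstacle you anticipate is real, but it lives in the relative-amenability branch, and the argument you sketch for it (``descend via the free structure of $\widetilde{N}$'') does not yet contain the idea: one needs a uniform-in-$t$ control of $\theta_s$ on the approximately central vectors coming from relative amenability, combined again with the spectral gap of the random walk on $\F_2$.
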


As in \cite{Io12}, several uniqueness theorems for Cartan subalgebras can be deduced from Theorem \ref{thm.AFP}. This is in particular the case for II$_1$ factors $M = L(\cR)$ that arise from a countable pmp equivalence relation $\cR$ that can be decomposed as a free product $\cR = \cR_1 * \cR_2$ of subequivalence relations $\cR_i \subset \cR$. Since we now no longer need to prove the spectral gap assumption, we can directly deduce from Theorem \ref{thm.AFP} the following improvement of \cite[Corollary 1.4]{Io12} and \cite[Theorem 6.3]{BHR12}.

\begin{corollaryletter}\label{cor.product-equiv}
Let $\cR$ be a countable ergodic pmp equivalence relation on the standard probability space $(X,\mu)$. Assume that $\cR = \cR_1 * \cR_2$ for two subequivalence relations $\cR_i \subset \cR$. Assume that $|\cR_1 \cdot x| \geq 3$ and $|\cR_2 \cdot x| \geq 2$ for a.e.\ $x \in X$. Then $L^\infty(X)$ is the unique Cartan subalgebra of $L(\cR)$ up to unitary conjugacy.
\end{corollaryletter}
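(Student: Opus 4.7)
The plan is to realize $M := L(\cR)$ as an amalgamated free product $M = M_1 *_B M_2$ with $B := L^\infty(X)$ and $M_i := L(\cR_i)$, and then apply Theorem~\ref{thm.AFP} to an arbitrary Cartan subalgebra $A \subset M$. Since $A$ is abelian it is amenable, hence in particular amenable relative to $M_1$ inside $M$, and since $A$ is Cartan we have $\cN_M(A)\dpr = M$. Taking $p = 1$ in Theorem~\ref{thm.AFP}, one of three alternatives must hold: (a) $A \prec_M B$, (b) $M \prec_M M_i$ for some $i \in \{1,2\}$, or (c) $M$ is amenable relative to $B$ inside $M$.

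The next step is to rule out (b) and (c). For (b), the assumption $|\cR_j \cdot x| \geq 2$ ensures $B \subsetneq M_j$ for each $j$, so alternating product unitaries of the form $u_1 u_2 u_1 u_2 \cdots$ with $u_i \in \cU(M_i)$ orthogonal to $B$ witness $M \not\prec_M M_i$ via Popa's intertwining criterion. For (c), since $B$ is amenable, $M$ being amenable relative to $B$ would force $M$ itself to be amenable, which by Connes--Feldman--Weiss is equivalent to $\cR$ being hyperfinite. However the size hypotheses $|\cR_1 \cdot x| \geq 3$ and $|\cR_2 \cdot x| \geq 2$ allow one to construct, by a measurable selection argument, a $3$-cycle $\sigma_1$ with orbits inside $\cR_1$-classes and an involution $\sigma_2$ with orbits inside $\cR_2$-classes, generating a free pmp action of $\Z/3\Z * \Z/2\Z \cong \PSL(2,\Z)$ whose orbit equivalence relation sits inside $\cR$. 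Since $\PSL(2,\Z)$ is non-amenable, this rules out hyperfiniteness of $\cR$.

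Only alternative (a) remains, and the final step is to upgrade $A \prec_M B$ to a unitary conjugacy $u A u^* = B$ for some $u \in \cU(M)$. This is a classical consequence of both $A$ and $B$ being Cartan subalgebras of the II$_1$ factor $M$ (Popa's conjugacy theorem for intertwineable Cartan subalgebras). The main subtlety in this plan is alternative (c): one must confirm that the combinatorial size hypotheses on the $\cR_i$ really do force $M$ to fail relative amenability over $B$. The $\PSL(2,\Z)$-embedding argument is the cleanest route, but making it rigorous requires a small measurable-selection argument to choose the $3$-cycles and involutions measurably, together with the observation that freeness of the resulting action is automatic from the free product structure of $\cR$. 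By contrast, alternative (b) and the final Cartan-intertwining step are standard techniques in the field and should go through without surprise.
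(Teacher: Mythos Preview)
Your overall plan matches the paper's: realize $M = L(\cR)$ as $M_1 *_B M_2$, apply Theorem~\ref{thm.AFP} to an arbitrary Cartan subalgebra $A$, rule out alternatives (b) and (c), and upgrade $A \prec_M B$ to unitary conjugacy via Popa's theorem. Your handling of (b) via alternating-word unitaries is essentially the paper's argument.

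For (c), however, your $\PSL(2,\Z)$-embedding has a gap that goes beyond measurable selection. A fixed-point-free element of order $3$ in $[\cR_1]$ forces every $\cR_1$-class to have cardinality divisible by $3$ or infinite, and a fixed-point-free involution in $[\cR_2]$ forces every $\cR_2$-class to have even or infinite cardinality. Neither follows from the hypotheses: if all $\cR_1$-classes have size $4$ and all $\cR_2$-classes have size $3$, neither $\sigma_1$ nor $\sigma_2$ exists as described. A case analysis on class sizes (varying the cyclic orders used) can repair this, but it is more than the ``small measurable-selection argument'' you anticipate.

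The paper sidesteps the issue by arguing directly with von Neumann algebras. From \cite[Lemma~2.6]{IKT08} one obtains unitaries $u,v \in \cU(M_1)$ with $E_B(u)=E_B(v)=E_B(u^*v)=0$ and $w \in \cU(M_2)$ with $E_B(w)=0$, with no finite-order requirement. Assuming $M$ amenable, take an $M$-central state $\Omega$ on $B(L^2(M))$ and decompose $L^2(M)=K_1\oplus K_2$ according to whether reduced words start in $M_1\ominus B$ or in $M_2\ominus B$. Then $u e_2 u^*$ and $v e_2 v^*$ are orthogonal subprojections of $e_1$, giving $2\Omega(e_2)\le\Omega(e_1)$, while $w e_1 w^*\le e_2$ gives $\Omega(e_1)\le\Omega(e_2)$. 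Hence $\Omega(1)=0$, a contradiction. This ping-pong uses exactly the same unitaries as your argument for (b), so nothing extra is needed.
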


A tracial von Neumann algebra $(M,\tau)$ is called \emph{strongly solid} if for every diffuse amenable von Neumann subalgebra $A \subset M$, the normalizer $\cN_M(A)\dpr$ is still amenable. For completeness, we also show how to deduce from Theorem \ref{thm.AFP} the following stability result for strong solidity under amalgamated free products, slightly improving on \cite[Theorem 1.8]{Io12}.

For the formulation of the result, recall from \cite[Section 3]{Po03} that an inclusion $B \subset (M_1,\tau)$ of tracial von Neumann algebras is called \emph{mixing} if for every sequence $b_n \in B$ with $\|b_n\| \leq 1$ for all $n$ and $b_n \recht 0$ weakly, we have that $\lim_n \|E_B(xb_ny)\|_2 = 0$ for all $x,y \in M_1 \ominus B$. Typical examples of mixing inclusions arise as $L(\Sigma) \subset L(\Gamma)$ when $\Sigma < \Gamma$ is a subgroup such that $g \Sigma g^{-1} \cap \Sigma$ is finite for all $g \in \Gamma - \Sigma$, or as $L(\Sigma) \subset B \rtimes \Sigma$ whenever $\Sigma$ acts in a mixing and trace preserving way on $(B,\tau)$.

\begin{corollaryletter}\label{cor.strong-solid}
Let $(M_i,\tau_i)$ be strongly solid von Neumann algebras with a common amenable von Neumann subalgebra $B \subset M_i$ satisfying ${\tau_1}_{|B} = {\tau_2}_{|B}$. Assume that the inclusion $B \subset M_1$ is mixing. Denote by $M = M_1 *_B M_2$ the amalgamated free product w.r.t.\ the unique trace preserving conditional expectations. Then $M$ is strongly solid.
\end{corollaryletter}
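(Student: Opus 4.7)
The plan is to apply Theorem \ref{thm.AFP} to an arbitrary diffuse amenable subalgebra $A \subset pMp$ and dispose of each of the three resulting alternatives, concluding that $\cP := \cN_{pMp}(A)\dpr$ is amenable, which is the defining property of strong solidity of $M$. Since $A$ and $B$ are both amenable, $A$ is amenable relative to $B$, and \emph{a fortiori} relative to $M_1$, inside $M$, so Theorem \ref{thm.AFP} yields at least one of: (i) $A \prec_M B$; (ii) $\cP \prec_M M_i$ for some $i \in \{1,2\}$; or (iii) $\cP$ is amenable relative to $B$ inside $M$.

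Case (iii) is immediate, because amenability of $B$ upgrades relative amenability over $B$ to ordinary amenability. For case (ii), Popa's intertwining machinery yields, possibly after a matrix amplification, a partial isometry $v$ with $v^* \cP v \subset q (M_i \ot \M_n(\C)) q$ for some projection $q$. Then $v^* A v$ is a diffuse amenable subalgebra of this corner and its normalizer there contains $v^* \cP v$. Since strong solidity of $M_i$ passes to matrix amplifications and to corners by projections, $v^* \cP v$ is amenable, and a standard cutting argument transports amenability to $\cP$ itself. Case (i) will be reduced to case (ii) by means of the mixing hypothesis on $B \subset M_1$: the principle going back to \cite{IPP05} and systematically used since states that when $B \subset M_1$ is mixing inside $M = M_1 *_B M_2$, $B$ behaves ``malnormally'' in $M$, so any diffuse $Q \subset qMq$ with $Q \prec_M B$ automatically satisfies $\cN_{qMq}(Q)\dpr \prec_M M_1$. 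Applied to $Q = A$, this yields $\cP \prec_M M_1$, which is case (ii) with $i = 1$.

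The main obstacle will be the clean execution of this final reduction: one must verify the hypotheses of the malnormality principle in our precise setting, in particular that the partial isometry intertwining $A$ into $B$ can be chosen to cooperate with conjugation by $\cP$, and that the intertwined image of $A$ is still diffuse. Once this reduction is in place, the three cases of Theorem \ref{thm.AFP} combine to show that every diffuse amenable $A \subset pMp$ has amenable normalizer, proving that $M$ is strongly solid.
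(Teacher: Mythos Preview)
Your plan matches the paper's approach, but there are two gaps in the execution.

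The more serious one is in case (ii). From $\cP \prec_M M_i$ you only obtain a \emph{corner}: a projection $q \in M_n(\C) \ot M_i$, a normal unital $*$-homomorphism $\vphi : \cP \to q(M_n(\C) \ot M_i)q$, and a nonzero partial isometry $v$ with $xv = v\vphi(x)$ and $vv^* \in \cP' \cap pMp$. Strong solidity of amplifications of $M_i$ then shows that $v^*\cP v \cong \cP vv^*$ is amenable; but $vv^*$ need not have full central support in $\cP$, so there is no ``standard cutting argument'' that makes all of $\cP$ amenable. The paper circumvents this by arguing by contradiction: assume $P = \cN_M(A)\dpr$ is not amenable, choose $z \in \cZ(P)$ so that $Pz$ has \emph{no} amenable direct summand, and apply Theorem \ref{thm.AFP} to $Az \subset zMz$. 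In case (ii) one then obtains a normal unital $*$-homomorphism $\vphi$ on $Pz$; since $Pz$ has no amenable direct summand, $\vphi(Pz)$ is nonamenable, while $\vphi(Az)$ is a diffuse amenable subalgebra of an amplified corner of $M_i$ whose normalizer contains $\vphi(Pz)$ --- contradicting strong solidity of $M_i$ via \cite[Proposition 5.2]{Ho09}.

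The second gap is in case (i). The inclusion $B \subset M$ is \emph{not} mixing in general (for $x,y \in M_2 \ominus B$ one has no control on $\|E_B(x b_n y)\|_2$ without assuming $B \subset M_2$ mixing), so the malnormality principle you invoke for $B$ inside $M$ is not available, and your conclusion $\cP \prec_M M_1$ is unjustified. What the paper does instead is show that mixing of $B \subset M_1$ forces the inclusion $M_2 \subset M$ to be mixing; then $Az \prec_M B \subset M_2$ together with \cite[Lemma 9.4]{Io12} yields $Pz \prec_M M_2$, which is case (ii) with $i = 2$.
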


On the level of tracial von Neumann algebras, by \cite{Ue07}, amalgamated free products and HNN extensions are one and the same thing, up to amplifications. Therefore, Theorem \ref{thm.AFP} has an immediate counterpart for HNN extensions that we formulate as Theorem \ref{thm.HNN} below.

As a consequence, we can then reprove \cite[Theorem 1.1]{Io12} and \cite[Corollary 1.7]{DI12}, showing $\cC$-rigidity for amalgamated free product groups, HNN extensions and their direct products. We refer to Theorem \ref{thm.Crigid} for a precise statement.

Finally in Section \ref{sec.Wstar-superrigid}, we use the methods of \cite{HV12} to deduce from Theorem \ref{thm.AFP} a uniqueness theorem for Cartan subalgebras in type III factors $L^\infty(X) \rtimes \Gamma$ arising from nonsingular free ergodic actions of amalgamated free product groups, see Theorem \ref{thm.unique-cartan-type-III}, generalizing \cite[Theorem D]{BHR12}. As a consequence, we can provide the following first nonsingular actions \emph{of type III} that are W$^*$-superrigid.

\begin{propositionletter}\label{prop.Wstar-superrigid}
Consider the linear action of $\SL(5,\Z)$ on $\R^5$ and define the subgroup $\Sigma < \SL(5,\Z)$ of matrices $A$ satisfying $A e_i = e_i$ for $i=1,2$. Put $\Gamma = \SL(5,\Z) *_\Sigma (\Sigma \times \Z)$ and denote by $\pi : \Gamma \recht \SL(5,\Z)$ the natural quotient homomorphism. The diagonal action $\Gamma \actson \R^5/\R_+ \times [0,1]^\Gamma$ given by
$$g \cdot (x,y) = (\pi(g) \cdot x, g \cdot y) \; ,$$
where $g \cdot y$ is given by the Bernoulli shift, is a nonsingular free ergodic action of type III$_1$ that is W$^*$-superrigid.

This means that for any nonsingular free action $\Gamma' \actson (X',\mu')$, the following two statements are equivalent.
\begin{itemize}
\item $L^\infty(X) \rtimes \Gamma \cong L^\infty(X') \rtimes \Gamma'$.
\item There exists an embedding of $\Gamma$ into $\Gamma'$ such that $\Gamma' \actson X'$ is conjugate with the induction of $\Gamma \actson X$ to a $\Gamma'$-action.
\end{itemize}
\end{propositionletter}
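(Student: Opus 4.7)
The proof I propose follows the three-step blueprint of \cite{HV12}, now relying on Theorem \ref{thm.AFP} of this paper to carry out the Cartan uniqueness step in the required generality. The three steps are: (i) uniqueness of the Cartan subalgebra, (ii) reduction from $*$-isomorphism to stable orbit equivalence, (iii) cocycle superrigidity together with reconstruction of the action.

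For step (i), one applies the type III Cartan uniqueness theorem announced as Theorem \ref{thm.unique-cartan-type-III} to $M := L^\infty(X) \rtimes \Gamma$. Since $\Gamma = \SL(5,\Z) *_\Sigma (\Sigma \times \Z)$ is a nondegenerate amalgamated free product over $\Sigma$ and the diagonal action is free, weakly mixing (thanks to the Bernoulli factor), and of type III$_1$, the hypotheses are satisfied and one concludes that $L^\infty(X)$ is the unique Cartan subalgebra of $M$ up to unitary conjugacy. For step (ii), given any free nonsingular action $\Gamma' \actson (X',\mu')$ with an isomorphism $L^\infty(X') \rtimes \Gamma' \cong M$, uniqueness of the Cartan lets us post-compose with a unitary so that $L^\infty(X)$ is mapped onto $L^\infty(X')$. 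This identifies the two Cartan inclusions up to amplification and produces a stable nonsingular orbit equivalence, encoded in a $1$-cocycle $\omega : \Gamma \times X \recht \Gamma'$ together with a compatible Radon--Nikodym cocycle.

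For step (iii), to untwist $\omega$ I would pass to the Maharam extension $\widetilde{X}$ of $X$, where the $\Gamma$-action becomes measure preserving (but infinite). Note that $\Sigma$ contains $\Z^6 \rtimes \SL(3,\Z)$, and $\Gamma$ contains $\SL(5,\Z)$, so $\Gamma$ is w-rigid and actually has property (T) in abundance. The Bernoulli factor $[0,1]^\Gamma$ supplies the $s$-malleable deformation needed by Popa's cocycle superrigidity theorem. In the nonsingular variant developed in \cite{HV12}, this forces $\omega$ to be cohomologous to a group homomorphism $\delta : \Gamma \recht \Gamma'$, with the $\R$-valued contribution absorbed into the Radon--Nikodym cocycle. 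Finally, Margulis--Zimmer superrigidity for the linear action $\SL(5,\Z) \actson \R^5/\R_+$, combined with the amalgamated free product structure of $\Gamma$ (the extra $\Z$ on the second vertex rules out exotic extensions of $\delta$ across the amalgam), forces $\delta$ to be injective with image of finite covolume, and identifies $\Gamma' \actson X'$ with the induction of $\Gamma \actson X$.

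The main technical obstacle is carrying out step (iii) in the genuinely type III setting: Popa's pmp cocycle superrigidity must be applied on the Maharam extension $\widetilde{X}$, and the resulting homomorphism $\delta$ then descended back to the original nonsingular action while keeping the Radon--Nikodym cocycle consistent with the target $\Gamma'$. A second subtlety, feeding back into step (i), is that Theorem \ref{thm.unique-cartan-type-III} is proved by passing to the continuous core of $M$ and applying Theorem \ref{thm.AFP}; one therefore has to check that the core retains the amalgamated free product structure and that the $\Gamma$-Bernoulli factor supplies the relative amenability and mixing ingredients needed by Theorem \ref{thm.AFP}.
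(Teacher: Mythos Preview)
Your three-step architecture (Cartan uniqueness $\Rightarrow$ stable orbit equivalence $\Rightarrow$ cocycle superrigidity and reconstruction) matches the paper's. However, step (iii) as you outline it has a genuine gap, and step (i) is missing a nontrivial verification.

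\textbf{Step (iii): wrong source of cocycle superrigidity.} You propose to run Popa's $s$-malleable cocycle superrigidity on the Maharam extension, citing a ``nonsingular variant developed in \cite{HV12}.'' But \cite{HV12} is a Cartan uniqueness paper and contains no such cocycle superrigidity theorem; and Popa's malleable/Bernoulli cocycle superrigidity is a pmp result, while the Maharam extension $Y=\R^5\times[0,1]^\Gamma$ carries an infinite invariant measure. The paper does not use the Bernoulli factor as a source of malleability for cocycle superrigidity at all. Instead it works with the action of $G=\Gamma\times\R_+$ on $Y$ and applies the Popa--Vaes cocycle superrigidity for lattices on homogeneous spaces \cite[Theorem~5.3]{PV08}: first the restriction to $\SL(5,\Z)$ is untwisted, then ergodicity of the diagonal $\Sigma$-action on $Y\times Y$ together with \cite[Lemma~5.5]{PV08} extends the homomorphism to the commuting copies of $\Z$ and $\R_+$, hence to all of $G$. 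Your subsequent appeal to ``Margulis--Zimmer superrigidity'' and ``image of finite covolume'' in an arbitrary countable $\Gamma'$ is not well-posed; the paper instead classifies the open normal subgroups of $G$ acting properly on $Y$ (only $\R_+$ survives, using that $\Gamma$ is icc and $\SL(5,\Z)$ has no proper infinite normal subgroups) and then invokes \cite[Lemma~5.10]{PV08} to conclude that any action stably orbit equivalent to $\Gamma\actson X$ is an induction.

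\textbf{Step (i): unverified hypotheses.} Theorem~\ref{thm.unique-cartan-type-III} does not take ``weakly mixing, type III$_1$'' as input. It requires (a) finitely many conjugates of $\Sigma$ with finite intersection, and (b) subgroups $\Lambda_i<\Gamma_i$ with $\Lambda_i\cap\Sigma$ finite and $\Lambda_i\actson X$ recurrent. The paper verifies (b) concretely: $\Lambda_1=\SL(2,\Z)$ embedded in the upper-left $2\times 2$ block of $\SL(5,\Z)$ (so $\Lambda_1\cap\Sigma=\{1\}$), and $\Lambda_2=\Z<\Sigma\times\Z$. Recurrence of these restrictions is what needs checking, not weak mixing of the full action. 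Your description of the proof of Theorem~\ref{thm.unique-cartan-type-III} is also off: one does not show that the continuous core ``retains the amalgamated free product structure''; rather one passes to the Maharam extension and applies Theorem~\ref{thm.AFP} to $p\cM p\ovt L(\Gamma)= (p\cM p\ovt L(\Gamma_1))*_{p\cM p\ovt L(\Sigma)}(p\cM p\ovt L(\Gamma_2))$ via the comultiplication map $\Delta$.
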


To clarify the statement of Proposition \ref{prop.Wstar-superrigid}, one should make the following observations. Contrary to the case of probability measure preserving actions, it not relevant to consider stable isomorphisms, since the type III factor $M = L^\infty(X) \rtimes \Gamma$ is isomorphic with $B(H) \ovt M$ for every separable Hilbert space $H$. For the same reason, it is unavoidable that $\Gamma' \actson (X',\mu')$ can be any induction of $\Gamma \actson (X,\mu)$ and need not be conjugate to $\Gamma \actson (X,\mu)$ itself.

It is also possible to prove that for $0 < \lambda < 1$, the analogous action of $\Gamma$ on $\R^5/\lambda^\Z \times [0,1]^\Gamma$ is of type III$_\lambda$ and W$^*$-superrigid in the appropriate sense. The correct formulation is necessarily more intricate because the action is by construction orbit equivalent with the action of $\Gamma \times \Z$ on $\R^5 \times [0,1]^\Gamma$. More generally for a type III$_\lambda$ free ergodic action $\Gamma \actson (X,\mu)$, there always is a canonically orbit equivalent action $\Gamma \times \Z \actson (X',\mu')$ where the $\Gamma$-action preserves the infinite measure $\mu'$ and the $\Z$-action scales $\mu'$ by powers of $\lambda$.

{\bf Acknowledgment.} The main part of this work was done at the \emph{Institut Henri Poincar\'{e}} in Paris and I would like to thank the institute for their hospitality.

\section{Preliminaries}

In the proof of our main technical result (Theorem \ref{thm.main-tech}), we make use of the following criterion for relative amenability due to \cite{OP07} (see also \cite[Section 2.5]{PV11}). We copy the formulation of \cite[Lemma 2.3]{Io12}.

\begin{lemma}[{\cite[Corollary 2.3]{OP07}}] \label{lem.rel-amen}
Let $(M,\tau)$ be a tracial von Neumann algebra and $p \in M$ a nonzero projection. Let $A \subset pMp$ and $B \subset M$ be von Neumann subalgebras. Let $\cL$ be any $B$-$M$-bimodule.

Assume that there exists a net of vectors $(\xi_i)_{i \in I}$ in $p L^2(M) \ot_B \cL$ with the following properties.
\begin{itemize}
\item For every $x \in pMp$, we have $\limsup_{i \in I} \|x \xi_i\|_2 \leq \|x\|_2$.
\item We have $\limsup_{i \in I} \|\xi_i\|_2 > 0$.
\item For every $a \in \cU(A)$, we have that $\lim_{i \in I} \|a \xi_i - \xi_i a \|_2 = 0$.
\end{itemize}
Then there exists a nonzero projection $q$ in the center of $A' \cap pMp$ such that $Aq$ is amenable relative to $B$ inside $M$.
\end{lemma}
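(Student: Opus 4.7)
The plan is to distill from the net $(\xi_i)$ an $A$-central positive functional on the basic construction $p\langle M,e_B\rangle p$, convert it via Radon--Nikodym to an element of $A'\cap pMp$, and then compress and centralize to obtain a nonzero projection $q\in\cZ(A'\cap pMp)$ together with a state witnessing the amenability of $Aq$ relative to $B$.

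For the first two steps, note that $\langle M,e_B\rangle$, being the commutant on $L^2(M)$ of the right $B$-action, acts naturally on $L^2(M)\ot_B\cL$ extending the given left $M$-action, so the formulas $\omega_i(T)=\langle T\xi_i,\xi_i\rangle$ define positive functionals on $p\langle M,e_B\rangle p$. The first hypothesis applied to $x=p$ gives $\|\xi_i\|_2^2\leq\tau(p)$, so $(\omega_i)$ is uniformly bounded and, after passing to a subnet, admits a weak-$*$ limit $\omega$ that inherits from the three hypotheses, respectively, the bound $\omega(x^*x)\leq\tau(x^*x)$ for $x\in pMp$, the nonvanishing $\omega(p)>0$, and $A$-centrality. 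Since $\omega|_{pMp}$ is dominated by the normal functional $\tau|_{pMp}$ it is itself normal (normal positive functionals form a band in the dual), so Radon--Nikodym supplies $h\in pMp$ with $0\leq h\leq p$, $h\neq 0$ and $\omega(x)=\tau(xh)$ on $pMp$; the $A$-centrality then forces $h\in A'\cap pMp$.

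For the compression, choose $\eps>0$ so small that $q_0=\chi_{[\eps,1]}(h)\in A'\cap pMp$ is nonzero, and define
\[
\Omega_0(T)=\frac{1}{\tau(q_0)}\,\omega\bigl((hq_0)^{-1/2}\,T\,(hq_0)^{-1/2}\bigr),\qquad T\in q_0\langle M,e_B\rangle q_0,
\]
where $(hq_0)^{-1/2}$ is the inverse square root computed in $q_0 Mq_0$. A direct computation using the trace property, the identity $(hq_0)^{-1/2}\,h\,(hq_0)^{-1/2}=q_0$, and $h,q_0\in A'\cap pMp$ shows that $\Omega_0$ is $Aq_0$-central and satisfies $\Omega_0|_{q_0 Mq_0}=\tau(q_0)^{-1}\tau|_{q_0 Mq_0}$, so $Aq_0$ is amenable relative to $B$ inside $q_0 Mq_0$. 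Finally, letting $q\in\cZ(A'\cap pMp)$ be the central support of $q_0$, one can pick partial isometries $v_n\in A'\cap qMq$ with $\sum_n v_n^*v_n=q$ and $v_nv_n^*\leq q_0$ (these exist because $A'\cap pMp$ is finite and $q$ is the central support of $q_0$), and then $\Omega(T)=\tau(q)^{-1}\sum_n\tau(q_0)\,\Omega_0(v_n T v_n^*)$ defines an $Aq$-central state on $q\langle M,e_B\rangle q$ restricting to the normalized trace on $qMq$.

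The main technical subtlety lies in this final transport step: one must check that the indexing partial isometries exist (via the central-support argument in finite von Neumann algebras) and that $Aq$-centrality, rather than merely $Aq_0$-centrality, is preserved under the averaging, which uses that the $v_n$ lie in $A'$ and that $a\in Aq$ interacts well with $q_0$. The preceding steps are routine: Step~1 is the standard conversion of an almost-central sequence into a positive functional, the Radon--Nikodym step relies only on the fact that domination by a normal functional preserves normality, and the construction of $\Omega_0$ is a straightforward conjugation by a positive element of $A'\cap pMp$.
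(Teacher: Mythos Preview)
The paper does not actually give a proof of this lemma: it is stated in the preliminaries as a known result, attributed to \cite[Corollary 2.3]{OP07} (with the formulation taken from \cite[Lemma 2.3]{Io12}). So there is no ``paper's own proof'' to compare against.

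That said, your argument is correct and is essentially the standard one. The only points worth tightening are minor. First, the hypothesis gives $\limsup_i\|\xi_i\|_2\leq\|p\|_2$, not a uniform bound on all $\|\xi_i\|_2$; you should pass to a cofinal subnet on which the norms are bounded before invoking weak-$*$ compactness, and simultaneously arrange that the limit of $\|\xi_i\|_2^2$ along this subnet is strictly positive (using the second hypothesis). Second, the assertion that $\langle M,e_B\rangle$ acts boundedly on $L^2(M)\ot_B\cL$ is correct but deserves a word of justification: since $\langle M,e_B\rangle=(JBJ)'$ on $L^2(M)$, every $T\in\langle M,e_B\rangle$ is right $B$-linear and hence induces a bounded operator $T\ot_B 1$ on the Connes tensor product with the same norm. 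Finally, in the transport step your verification of $Aq$-centrality goes through because $v_n\in A'\cap pMp$ implies $a\,v_nTv_n^{*}=(aq_0)\,v_nTv_n^{*}$ and $v_nTv_n^{*}\,a=v_nTv_n^{*}\,(aq_0)$ for $a\in A$, so the $Aq_0$-centrality of $\Omega_0$ applies term by term; the series converges absolutely since $\sum_n\Omega_0(v_nv_n^{*})=\tau(q_0)^{-1}\sum_n\tau(v_n^{*}v_n)=\tau(q_0)^{-1}\tau(q)<\infty$.
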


\section{Key technical theorem}\label{sec.key}

Throughout this section, we fix an amalgamated free product $M = M_1 *_B M_2$ of tracial von Neumann algebras $(M_i,\tau)$ with a common von Neumann subalgebra $B \subset M_i$ w.r.t.\ the unique trace preserving conditional expectations.

\subsection{The malleable deformation of an amalgamated free product}\label{sec.malleable}

We recall from \cite[Section 2.2]{IPP05} the construction of Popa's malleable deformation of $M$. We denote $G = \F_2$, with free generators $a, b \in G$. Write $G_1 = a^\Z$ and $G_2 = b^\Z$. We define $\Mtil = M *_B (B \ovt L(G))$. Writing $\Mtil_i = M_i *_B (B \ovt L(G_i))$, we can also view $\Mtil = \Mtil_1 *_B \Mtil_2$. Choose self-adjoint elements $h_j \in L(G_j)$ with spectrum $[-\pi,\pi]$ such that $u_a = \exp(i h_1)$ and $u_b = \exp(i h_2)$. Define the $1$-parameter groups of unitaries $(u_{j,t})_{t \in \R}$ in $L(G_j)$ given by $u_{j,t} = \exp(i t h_j)$. We finally define the $1$-parameter group of automorphisms $(\theta_t)_{t \in \R}$ of $\Mtil$ given by
$$\theta_t(x) = u_{j,t} x u_{j,t}^* \quad\text{for all}\;\; x \in \Mtil_j \; .$$
Note that $\theta_t$ is well defined because $u_{j,t} b u_{j,t}^* = b$ for all $b \in B$ and $j \in \{1,2\}$.

We define $\cS$ as the set of all finite alternating sequences of $1$'s and $2$'s, including the empty sequence $\emptyset$. So the elements of $\cS$ are the finite sequences $(1,2,1,2,\cdots)$ and $(2,1,2,1,\cdots)$. The length of an alternating sequence $\cI \in \cS$ is denoted by $|\cI|$. For every $(i_1,\ldots,i_n) \in \cS$, we define $\cH_{(i_1,\ldots,i_n)} \subset L^2(M)$ as the closed linear span of $(M_{i_1} \ominus B) \cdots (M_{i_n} \ominus B)$. By convention, we put $\cH_\emptyset = L^2(B)$. So we have the orthogonal decomposition
$$L^2(M) = \bigoplus_{\cI \in \cS} \cH_\cI \; .$$
We denote by $P_\cI$ the orthogonal projection of $L^2(M)$ onto $\cH_\cI$.

Denote $\rho_t = |\sin(\pi t) / \pi t|^2$. A direct computation gives us that for all $x \in L^2(M)$ and all $t \in \R$,
\begin{align}
\|E_M(\theta_t(x))\|_2^2 &= \sum_{\cI \in \cS} \rho_t^{2|\cI|} \|P_\cI(x)\|_2^2 \; , \label{eq.EMtheta}\\
\|x - \theta_t(x)\|_2^2 &= \sum_{\cI \in \cS} 2(1-\rho_t^{|\cI|}) \|P_\cI(x)\|_2^2 \; , \notag\\
\|\theta_t(x) - E_M(\theta_t(x))\|_2^2 &= \sum_{\cI \in \cS} (1- \rho_t^{2|\cI|}) \|P_\cI(x)\|_2^2 \; \notag.
\end{align}
The last two equalities imply the following transversality property in the sense of \cite[Lemma 2.1]{Po06}.
\begin{equation}\label{eq.transverse}
\|x - \theta_t(x)\|_2 \leq \sqrt{2} \|\theta_t(x) - E_M(\theta_t(x))\|_2 \quad\text{for all}\;\; x \in L^2(M), t \in \R \; .
\end{equation}

The following is the main technical result of \cite{IPP05}. For a proof of the version as we state it here, we refer to \cite[Section 5]{Ho07} and \cite[Theorem 5.4]{PV09}.

\begin{theorem}[{\cite[Theorem 3.1]{IPP05}}] \label{thm.IPP}
Let $p \in M$ be a nonzero projection and $A \subset pMp$ a von Neumann subalgebra. Assume that there exists an $\eps > 0$ and a $t > 0$ such that $\|E_M(\theta_t(a))\|_2 \geq \eps$ for all $a \in \cU(A)$. Then at least one of the following statements hold.
\begin{itemize}
\item $A \prec_M B$.
\item There exists an $i \in \{1,2\}$ such that $\cN_{pMp}(A)\dpr \prec_M M_i$.
\end{itemize}
\end{theorem}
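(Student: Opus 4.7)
The plan is to run the classical Ioana--Peterson--Popa scheme, following the presentations in \cite[Section 5]{Ho07} and \cite[Theorem 5.4]{PV09}. Assume $A \not\prec_M B$; the goal is to prove $\cN_{pMp}(A)\dpr \prec_M M_i$ for some $i \in \{1,2\}$. As a first reduction, I combine the $\theta_t$-invariance of $\|\cdot\|_2$, the orthogonal decomposition $\|\theta_t(a)\|_2^2 = \|E_M(\theta_t(a))\|_2^2 + \|\theta_t(a) - E_M(\theta_t(a))\|_2^2$, and the transversality inequality \eqref{eq.transverse} to convert the hypothesis into a uniform upper bound on $\|a - \theta_t(a)\|_2$ for $a \in \cU(A)$. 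Since $\theta$ is a $1$-parameter group, a standard halving argument then allows $t$ to be taken as small as desired, which will be useful for the convex-hull step.

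The second step produces an intertwiner via Popa's minimal-norm trick. Consider the weakly closed convex set
$$\cC := \overline{\convex}^{\,w} \{a^* \theta_t(a) : a \in \cU(A)\} \subset L^2(p \Mtil \theta_t(p))$$
and let $v \in \cC$ be the unique element of minimal $\|\cdot\|_2$-norm. Invariance of $\cC$ under the maps $x \mapsto u x \theta_t(u)^*$ with $u \in \cU(A)$ forces $uv = v \theta_t(u)$ for all $u \in \cU(A)$. Nontriviality $v \neq 0$ follows from the hypothesis: $\Re \tau(a^* \theta_t(a)) = \Re \langle E_M(\theta_t(a)), a \rangle$ admits a uniform positive lower bound once $t$ is small enough. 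Taking the polar decomposition of $v$ inside $\Mtil$ yields a nonzero partial isometry $w$ satisfying $a w = w \theta_t(a)$ for all $a \in A$, i.e.\ an $A$-$\theta_t(A)$-subbimodule of $L^2(p\Mtil)$ that is finitely generated on the right. Since $\cN_{pMp}(A)$ normalizes $A$, for every $u \in \cN_{pMp}(A)$ the element $u^* w$ also intertwines $A$ into $\theta_t(A)$, which after a second averaging upgrades the conclusion to $Q \prec_{\Mtil} \theta_t(Q)$, where $Q := \cN_{pMp}(A)\dpr$.

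The final step, and the main technical obstacle, is to turn the intertwining $Q \prec_{\Mtil} \theta_t(Q)$ inside $\Mtil$ into a statement inside $M$. Here I exploit the explicit amalgamated structure $\Mtil = \Mtil_1 *_B \Mtil_2$ with $\Mtil_j = M_j *_B (B \ovt L(G_j))$, together with the fact that $\theta_t(\Mtil_j) = u_{j,t} \Mtil_j u_{j,t}^*$ where the unitaries $u_{j,t} \in B \ovt L(G_j)$ are free from $M$ over $B$. A Kurosh-type analysis of alternating words in the $Q$-$\theta_t(Q)$-intertwining bimodule (of the kind at the core of \cite[Theorem 1.2.1]{IPP05}) produces the dichotomy: either the intertwiner is forced into $B$, which descends to $A \prec_M B$ and contradicts the standing assumption; or $Q \prec_{\Mtil} \theta_t(M_i)$ for some $i \in \{1,2\}$. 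Composing with $\theta_{-t}$ and using that intertwining into $\Mtil_i$ inside $\Mtil$ descends (via $E_M$ and the tree-like expansion of alternating words over $B$) to intertwining into $M_i$ inside $M$, yields $\cN_{pMp}(A)\dpr \prec_M M_i$, as required. This word-combinatorics step is the substantive content of the proof.
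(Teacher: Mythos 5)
The paper does not prove Theorem \ref{thm.IPP}; it quotes it from \cite{IPP05} and refers to \cite{Ho07,PV09} for proofs of this precise form. So your proposal has to be judged on its own merits against that literature. Your first two steps are sound: the explicit spectral formula \eqref{eq.EMtheta} even gives $\langle\theta_t(a),a\rangle=\sum_{\cI}\rho_t^{|\cI|}\|P_\cI(a)\|_2^2\geq\|E_M(\theta_t(a))\|_2^2\geq\eps^2$ for every $t$, so the convex hull has nonzero minimal element without any smallness condition on $t$, and polar decomposition does yield a nonzero partial isometry $w\in p\Mtil\,\theta_t(p)$ with $aw=w\theta_t(a)$ for $a\in A$.

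The gap is in the sentence ``after a second averaging upgrades the conclusion to $Q\prec_{\Mtil}\theta_t(Q)$''. Averaging $we_{\theta_t(M)}w^*$ over $\cN_{pMp}(A)$ produces a nonzero element of $Q'\cap p\langle\Mtil,e_{\theta_t(M)}\rangle p$ with finite trace, hence $Q\prec_{\Mtil}\theta_t(M)$ --- but not $Q\prec_{\Mtil}\theta_t(Q)$, and the element $u^*w\theta_t(u)$ intertwines only $A$, not $Q$. The stronger claim $Q\prec_{\Mtil}\theta_t(Q)$ is simply not what any averaging can deliver here, nor is it what the Kurosh-type argument requires. Compounding this, the proposed recovery step is also off: if $Q\prec_{\Mtil}\theta_t(M_i)$, applying $\theta_{-t}$ gives $\theta_{-t}(Q)\prec_{\Mtil}M_i$, and $\theta_{-t}(Q)\neq Q$ since $Q\subset M$ is not $\theta_t$-invariant; so one cannot ``compose with $\theta_{-t}$'' to return to $M$. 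In the actual IPP/Houdayer argument, the normalizer control is not obtained by a separate averaging and conjugation but is built directly into the analysis of the intertwiner: one works with the relation $aw=w\theta_t(a)$ (or the corresponding finite-trace element of $A'\cap p\langle\Mtil,e_{\theta_t(M)}\rangle p$), decomposes $w$ into reduced words of the amalgam $\Mtil=\Mtil_1*_B\Mtil_2$, and under the standing hypothesis $A\not\prec_M B$ the possible word patterns pin down, for all $u\in\cN_{pMp}(A)$ simultaneously, a common $i\in\{1,2\}$ with $\cN_{pMp}(A)\dpr\prec_M M_i$. Your black-boxed ``word-combinatorics step'' is indeed where the work is, but as you have set it up the inputs to that step are wrong, so the proof as sketched does not close.
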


\subsection{\boldmath The algebra $\Mtil$ as a crossed product with $\F_2$ and random walks on $\F_2$} \label{sec.ioana}

We recall here the fundamental idea of \cite{Io12} to consider $\Mtil$ as a crossed product with the free group $\F_2$ and to exploit the spectral gap of random walks on the nonamenable group $G = \F_2$. As in \cite[Remark 4.5]{Io06} and \cite[Section 3]{Io12}, we decompose $M = N \rtimes G$, where $N$ is defined as the von Neumann subalgebra of $\Mtil$ generated by $\{u_g M u_g^* \mid g \in G\}$ and normalized by the unitaries $(u_g)_{g \in G}$. Note that $N$ is the infinite amalgamated free product of the subalgebras $u_g M u_g^*$, $g \in G$, over the common subalgebra $B$. From this point of view, the action of $G$ on $N$ is the free Bernoulli action.

For every $i \in \{1,2\}$ and $t \in (0,1)$, we define the maps $\be_{i,t} : G_i \recht \R$ given by
$$\be_{i,t}(g) = \tau(u_{i,t} u_g^*) \quad\text{for all}\;\; g \in G_i \; .$$
We then denote by $\gamma_{i,t}$ and $\mu_{i,t}$ the probability measures on $G$ given by
\begin{align*}
\gamma_{i,t}(g) &= \begin{cases} |\be_{i,t}(g)|^2 &\quad\text{if}\;\; g \in G_i \; , \\ 0 &\quad\text{if}\;\; g \not\in G_i \; ,\end{cases} \\
\mu_{i,t} &= \gamma_{i,t} * \gamma_{i,t} \; ,
\end{align*}
where we used the usual convolution product between probability measures on $G$~:
$$(\gamma * \gamma')(g) = \sum_{h,k \in G, hk = g} \gamma(h) \, \gamma'(k) \; .$$

For $\cI \in \cS$, we finally denote by $\mu_{\cI,t}$ the probability measure on $G$ given by
$$\mu_{\emptyset,t}(g) = \delta_{g,e} \quad\text{and}\quad \mu_{(i_1,\ldots,i_n),t} = \mu_{i_1,t} * \mu_{i_2,t} * \cdots * \mu_{i_n,t} \; .$$
The probability measures $\mu_{\cI,t}$ give rise to the Markov operators $T_{\cI,t}$ on $\ell^2(G)$ given by
$$T_{\cI,t} = \sum_{g \in G} \mu_{\cI,t}(g) \lambda_g \; .$$
The support of the probability measures $\gamma_{i,t}$ and $\mu_{i,t}$ equals $G_i$. So the support $S$ of the probability measure $\mu_{(1,2),t}$ equals $G_1 G_2$. Since $S S^{-1}$ generates the group $\F_2$ and since $\F_2$ is nonamenable, it follows from Kesten's criterion (see e.g.\ \cite[Corollary 18.5]{Pi84}) that $\|T_{(1,2),t}\| < 1$ for all $t \in (0,1)$.
Writing $c_t = \|T_{(1,2),t}\|^{1/2}$, we have found numbers $0 < c_t < 1$ such that
$$\| T_{\cI,t} \| \leq c_t^{|\cI|-1} \quad\text{for all}\;\; \cI \in \cS \;\;\text{and all}\;\; 0 < t < 1 \; .$$

For every $x \in \Mtil$ and $h \in G$, we define $(x)_h = E_N(x u_h^*)$. So with $\|\,\cdot\,\|_2$-convergence, we have $x = \sum_{h \in G} (x)_h u_h$. We recall the following result of \cite{Io12}.

\begin{lemma}[{\cite[Formula (3.5)]{Io12}}] \label{lem.compute}
For all $t \in (0,1)$, $h \in G$ and $x,y \in L^2(M)$, we have that
$$\langle (\theta_t(x))_h , (\theta_t(y))_h \rangle = \sum_{\cI \in \cS} \langle P_\cI(x) , y \rangle \; \mu_{\cI,t}(h) \; .$$
\end{lemma}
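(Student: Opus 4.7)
The strategy is to verify the identity on a dense set of ``alternating words'' by directly Fourier-expanding $\theta_t$ along $G = \F_2$, and then evaluating the resulting traces via the infinite amalgamated free product structure of $N$. Both sides of the claimed identity are sesquilinear in $(x,y) \in L^2(M) \times L^2(M)$ and $\|\cdot\|_2$-continuous, so by $L^2(M) = \bigoplus_{\cI} \cH_\cI$ it suffices to test on products $x = x_1 \cdots x_n \in \cH_\cI$ and $y = y_1 \cdots y_m \in \cH_\cJ$ with $x_k \in M_{i_k} \ominus B$ and $y_\ell \in M_{j_\ell} \ominus B$.

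Substituting $u_{i,t} = \sum_{g \in G_i} \be_{i,t}(g)\, u_g$ into $\theta_t(x) = \prod_k u_{i_k,t} x_k u_{i_k,t}^*$ and iterating the telescoping identity $u_a X u_b^* = (u_a X u_a^*) u_{ab^{-1}}$ leads to
\[ \theta_t(x) = \sum_{(a_k, b_k)} \Bigl(\prod_k \be_{i_k}(a_k) \overline{\be_{i_k}(b_k)}\Bigr) \Bigl(\prod_k u_{c_k} x_k u_{c_k}^*\Bigr) u_h, \]
with $a_k, b_k \in G_{i_k}$, $c_k := a_1 b_1^{-1} \cdots a_{k-1} b_{k-1}^{-1} a_k$ and $h := a_1 b_1^{-1} \cdots a_n b_n^{-1}$. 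Reading off the coefficient of $u_h$ gives an explicit formula for $(\theta_t(x))_h$, and analogously for $(\theta_t(y))_h$ with primed variables. The inner product $\langle (\theta_t(x))_h, (\theta_t(y))_h \rangle$ thus becomes a double sum whose key ingredient is the trace, inside $N = *_{B, g \in G} u_g M u_g^*$, of a length-$(n+m)$ product, each letter $u_{c_k} x_k u_{c_k}^*$ living in $u_{c_k} M u_{c_k}^* \ominus B$ (and similarly for the primed letters).

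The crux is the evaluation of this trace by the noncrossing-pair argument in amalgamated free probability. Each $u_g$ commutes with $B$, so every letter $u_{c_k} x_k u_{c_k}^*$ has zero $E_B$-expectation. Iterating the freeness relation --- peeling off innermost pairs and using that alternating words with zero-$E_B$ letters have vanishing $B$-conditional expectation --- and exploiting that the alternation of $\cI$ and $\cJ$ forces every non-diagonal pairing to yield a further alternating zero-expectation word, one concludes that the trace vanishes unless $\cI = \cJ$, $n = m$, and $c_k = c_k'$ for every $k$, equivalently $a_k = a_k'$ and $b_k = b_k'$. In this diagonal configuration the product telescopes, and its trace reduces to $\tau(x_1 \cdots x_n y_n^* \cdots y_1^*) = \langle x, y \rangle$.

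This immediately gives $0 = 0$ on both sides when $\cI \neq \cJ$; for $\cI = \cJ$ one obtains
\[ \langle (\theta_t(x))_h, (\theta_t(y))_h \rangle = \langle x, y \rangle \sum_{(a_k, b_k) :\, \prod_k a_k b_k^{-1} = h}\, \prod_k \gamma_{i_k,t}(a_k)\, \gamma_{i_k,t}(b_k), \]
and the convolution rule identifies the remaining sum with the density at $h$ of $\mu_{i_1,t} * \cdots * \mu_{i_n,t} = \mu_{\cI,t}$, which completes the proof. The main obstacle is the trace-vanishing step: one has to rule out all non-diagonal noncrossing pairings in an \emph{infinite} amalgamated free product indexed by the non-amenable group $G$, which requires carefully combining the alternation of $\cI, \cJ$, the $B$-bimodularity of $E_B$, and the fact that each $u_g$ commutes with $B$.
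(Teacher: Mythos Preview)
The paper does not prove this lemma itself; it is quoted from \cite{Io12}. Your strategy --- reduce by sesquilinearity to $x \in \cH_\cI$, $y \in \cH_\cJ$, Fourier-expand each $u_{i_k,t}$, and evaluate the resulting trace via freeness --- is the natural direct computation and is correct. The vanishing step you flag as the ``main obstacle'' can be organised more transparently by working in $\Mtil = \Mtil_1 *_B \Mtil_2$ rather than in the infinite free product $N$. A short computation inside $\Mtil_i = M_i *_B (B \ovt L(G_i))$ shows that the subspaces $u_a (M_i \ominus B) u_b^* \subset L^2(\Mtil_i) \ominus L^2(B)$ are $B$-biorthogonal for distinct pairs $(a,b) \in G_i \times G_i$, in the sense that $E_B\bigl((u_{a'} y\, u_{b'}^*)^*\, c\, (u_a x\, u_b^*)\bigr) = 0$ for all $c \in B$ whenever $(a,b) \neq (a',b')$. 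The standard nested-$E_B$ formula for inner products of reduced words in $\Mtil_1 *_B \Mtil_2$ then kills every off-diagonal term in your double sum and collapses the diagonal ones to $\langle x, y\rangle$, with no casework on the (possibly non-alternating) index sequence $c_1, \ldots, c_n, c'_m, \ldots, c'_1$ inside $N$. Your ``noncrossing-pair'' phrasing is a little loose in the amalgamated setting, but the content is the same.

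One bookkeeping caveat in your last line: the sum
\[
\sum_{(a_k,b_k):\,\prod_k a_kb_k^{-1}=h}\ \prod_k \gamma_{i_k,t}(a_k)\,\gamma_{i_k,t}(b_k)
\]
is the density at $h$ of $\gamma_{i_1,t} * \check\gamma_{i_1,t} * \cdots * \gamma_{i_n,t} * \check\gamma_{i_n,t}$, where $\check\gamma(g) := \gamma(g^{-1})$, and not of $\gamma_{i_1,t} * \gamma_{i_1,t} * \cdots$. Since $\gamma_{i,t}(a^n) = \sin^2(\pi t)/(\pi^2(t-n)^2)$ is not symmetric in $n$, this does not literally match the paper's stated definition $\mu_{i,t} = \gamma_{i,t} * \gamma_{i,t}$. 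This is harmless for every use made of the lemma --- only the facts that $\mu_{\cI,t}$ is a probability measure on $G$ and that $\|T_{\cI,t}\| \leq c_t^{|\cI|-1}$ are ever invoked, and both hold for either convolution --- but your computation is the correct one and the mismatch lies in the recorded definition of $\mu_{i,t}$.
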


Also recall from \cite{Io12} that Lemma \ref{lem.compute} yields the following result.

\begin{theorem}[{\cite[Theorem 3.2]{Io12}}] \label{thm.embed}
Let $p \in M$ be a nonzero projection and $A \subset pMp$ a von Neumann subalgebra. Assume that for some $t \in (0,1)$, we have that $\theta_t(A) \prec_{\Mtil} N$. Then at least one of the following statements holds.
\begin{itemize}
\item $A \prec_M B$.
\item There exists an $i \in \{1,2\}$ such that $\cN_{pMp}(A)\dpr \prec_M M_i$.
\end{itemize}
\end{theorem}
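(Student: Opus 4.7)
The plan is to reduce the statement to an application of Theorem \ref{thm.IPP}. Concretely, I will show that the hypothesis $\theta_t(A) \prec_{\Mtil} N$ forces the existence of $s \in (0,1)$ and $\eps > 0$ with $\|E_M(\theta_s(a))\|_2 \geq \eps$ for every $a \in \cU(A)$; once this is established, Theorem \ref{thm.IPP} yields immediately one of the two alternatives.

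To produce such a lower bound, I would first unpack the intertwining hypothesis via Popa's criterion: $\theta_t(A) \prec_{\Mtil} N$ means there exist finitely many $x_k, y_k \in \Mtil$ and $\delta > 0$ with $\sum_k \|E_N(x_k \theta_t(a) y_k)\|_2^2 \geq \delta$ for every $a \in \cU(A)$. Approximating $x_k, y_k$ by Fourier truncations $\sum_{g \in F} x_{k,g} u_g$ in the crossed product decomposition $\Mtil = N \rtimes G$, and using the identity $E_N(u_g \theta_t(a) u_h) = u_g \, (\theta_t(a))_{g^{-1}h^{-1}} \, u_g^*$, this reduces to the cleaner assertion that there exist a finite set $K \subset G$ and $\delta' > 0$ with
\[
\sum_{h \in K} \|(\theta_t(a))_h\|_2^2 \;\geq\; \delta' \quad\text{for every } a \in \cU(A).
\]

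The heart of the argument is then a random-walk concentration estimate exploiting non-amenability of $G = \F_2$. By Lemma \ref{lem.compute} the left-hand side rewrites as $\sum_{\cI \in \cS} \|P_\cI(a)\|_2^2 \, \mu_{\cI,t}(K)$. Writing $\mu_{\cI,t}(K) = \langle T_{\cI,t} \delta_e, \mathbf{1}_K \rangle_{\ell^2(G)}$ and combining Cauchy--Schwarz with the Kesten-type estimate $\|T_{\cI,t}\| \leq c_t^{|\cI|-1}$ gives $\mu_{\cI,t}(K) \leq |K|^{1/2}\, c_t^{|\cI|-1}$, which tends to $0$ as $|\cI|\to \infty$ since $c_t < 1$. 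Choosing $n_0$ large enough that $|K|^{1/2} c_t^{n_0-1}\,\tau(p) \leq \delta'/2$ and splitting the sum at $n_0$, the tail is absorbed into $\delta'/2$ and, using the trivial bound $\mu_{\cI,t}(K) \leq 1$, I obtain
\[
\sum_{\cI \in \cS,\ |\cI| < n_0} \|P_\cI(a)\|_2^2 \;\geq\; \frac{\delta'}{2} \quad\text{for every } a \in \cU(A).
\]

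To conclude, formula \eqref{eq.EMtheta} gives
\[
\|E_M(\theta_s(a))\|_2^2 \;=\; \sum_{\cI \in \cS} \rho_s^{2|\cI|}\, \|P_\cI(a)\|_2^2 \;\geq\; \rho_s^{2(n_0-1)} \cdot \frac{\delta'}{2}
\]
for any $s \in (0,1)$, a strictly positive uniform lower bound since then $\rho_s > 0$. Applying Theorem \ref{thm.IPP} with this $s$ and $\eps := \rho_s^{n_0-1}\sqrt{\delta'/2}$ delivers the desired dichotomy. I expect the main obstacle to be the Fourier-truncation reduction in the second step: converting Popa's abstract criterion into the clean estimate on $\sum_{h \in K}\|(\theta_t(a))_h\|_2^2$ requires a careful $L^2$-approximation of the $x_k, y_k$ so that the truncation errors do not consume the constant $\delta$, which is standard but somewhat fiddly bookkeeping in the crossed product $N \rtimes G$.
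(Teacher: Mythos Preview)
Your proposal is correct and follows essentially the same approach as the paper: both arguments combine Lemma~\ref{lem.compute}, the Kesten bound $\|T_{\cI,t}\| \leq c_t^{|\cI|-1}$, formula~\eqref{eq.EMtheta}, and Theorem~\ref{thm.IPP}. The only difference is that the paper argues by contraposition---assuming both alternatives fail, Theorem~\ref{thm.IPP} yields a net $(a_i)$ in $\cU(A)$ with $\|E_M(\theta_s(a_i))\|_2 \to 0$ for every $s$, and one then shows directly that $\|(\theta_t(a_i))_h\|_2 \to 0$ for each fixed $h \in G$, contradicting $\theta_t(A) \prec_{\Mtil} N$---whereas you run the implication forward. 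The contrapositive route has the minor advantage that it sidesteps the Fourier-truncation reduction you flag as the main obstacle: since the negation of $\prec_{\Mtil} N$ must be witnessed against \emph{all} $x,y \in \Mtil$, one can simply test with $x=1$, $y=u_h^*$ and avoid approximating general $x_k,y_k$.
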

\begin{proof}
We prove the theorem by contraposition. So assume that the conclusion fails. By Theorem \ref{thm.IPP}, we find a net of unitaries $(a_i)_{i \in I}$ in $\cU(A)$ such that for all $s \in (0,1)$, we have that $\lim_{i \in I} \|E_M(\theta_s(a_i))\|_2 = 0$. We prove that for all $t \in (0,1)$, we have that $\theta_t(A) \not\prec_{\Mtil} N$. So fix $t \in (0,1)$. It suffices to prove that for all $h \in G$, we have $\lim_{i \in I} \|(\theta_t(a_i))_h\|_2 = 0$.

Fix $h \in G$ and fix $\eps > 0$. Take a large enough integer $n_0$ such that $c_t^{n_0-1} < \eps$. So, for all $\cI \in \cS$ with $|\cI| \geq n_0$, we have that $\|T_{\cI,t}\| < \eps$ and, in particular,
$$\mu_{\cI,t}(h) = \langle T_{\cI,t} \delta_e , \delta_h \rangle < \eps \; .$$
Denote by
$$P_0 = \sum_{\cI \in \cS, |\cI| < n_0} P_\cI$$
the projection onto the closed linear span of ``all words of length $< n_0$''. Using Lemma \ref{lem.compute}, we get for all $i \in I$ that
$$\|(\theta_t(a_i))_h\|_2^2 \leq \|P_0(a_i)\|_2^2 + \eps \; .$$
By \eqref{eq.EMtheta}, we can take $s > 0$ small enough such that $\|P_0(a_i)\|_2 \leq 2 \|E_M(\theta_s(a_i))\|_2$ for all $i \in I$.
Since $\lim_{i \in I} \|E_M(\theta_s(a_i))\|_2 = 0$, it follows that
$$\limsup_{i \in I} \|(\theta_t(a_i))_h\|_2^2 \leq \eps \; .$$
Since $\eps > 0$ is arbitrary, it indeed follows that for all $h \in G$, we have $\lim_{i \in I} \|(\theta_t(a_i))_h\|_2 = 0$.
\end{proof}

\subsection{Relative amenability and the malleable deformation}

The following is our main technical result. The same statement was proven in \cite[Theorem 5.1]{Io12} under the additional assumption that $A' \cap (pMp)^\omega = \C 1$ for some free ultrafilter $\omega$, i.e.\ under the assumption that there are no nontrivial bounded sequences in $pMp$ that asymptotically commute with $A$.

\begin{theorem}\label{thm.main-tech}
Let $p \in M$ be a nonzero projection and $A \subset pMp$ a von Neumann subalgebra. Assume that for all $t \in (0,1)$, we have that $\theta_t(A)$ is amenable relative to $N$ inside $\Mtil$. Then at least one of the following statements holds.
\begin{itemize}
\item There exists $i \in \{1,2\}$ such that $A \prec_M M_i$.
\item We have that $A$ is amenable relative to $B$ inside $M$.
\end{itemize}
\end{theorem}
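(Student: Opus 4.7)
I argue by contraposition: assuming $A \not\prec_M M_i$ for both $i \in \{1,2\}$, I want to deduce that $A$ is amenable relative to $B$ inside $M$. By the standard maximal-projection trick let $z \in \rZ(A' \cap pMp)$ be the largest projection with $Az$ amenable relative to $B$; suppose toward a contradiction that $z \neq p$. After replacing $(A,p)$ by the corner $(A(p-z),p-z)$---which still satisfies $A \not\prec_M M_i$, still has each $\theta_t(A(p-z)) = \theta_t(A)\theta_t(p-z)$ amenable relative to $N$ in $\Mtil$, and by maximality of $z$ admits \emph{no} nonzero projection $q \in \rZ(A' \cap pMp)$ with $Aq$ amenable relative to $B$---it suffices, by Lemma \ref{lem.rel-amen}, to produce a net $(\xi_k)$ in $pL^2(M)\ot_B\cL$, for some $B$-$M$-bimodule $\cL$, that satisfies the three conditions of that lemma: this will exhibit a nonzero corner of $A$ amenable relative to $B$, contradicting the reduction.

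\textbf{Extracting the two inputs.} First, $A \not\prec_M M_i$ implies $A \not\prec_M B$, so Popa's intertwining criterion yields a net $(a_n) \subset \cU(A)$ with $\lim_n \|E_{M_i}(x a_n y)\|_2 = 0$ for all $x,y \in M$ and $i \in \{1,2\}$. Running the same computation as in the proof of Theorem \ref{thm.embed}---Lemma \ref{lem.compute} combined with the Kesten bound $\|T_{\cI,t}\| \leq c_t^{|\cI|-1}$ from the nonamenability of $\F_2$---this net actually satisfies $\lim_n \|(\theta_t(a_n))_h\|_2 = 0$ for every fixed $t \in (0,1)$ and every fixed $h \in G$. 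Second, from the assumption that $\theta_t(A)$ is amenable relative to $N$ in $\Mtil$, I extract for each such $t$ a net $(\eta^{(t)}_m)_m$ in $\theta_t(p)\bigl(L^2(\Mtil) \ot_N L^2(\Mtil)\bigr)\theta_t(p)$ with $\|\eta^{(t)}_m\|_2 \leq 1$, $\lim_m \langle x \eta^{(t)}_m, \eta^{(t)}_m \rangle = \tau(x)$ on $\theta_t(p)\Mtil\theta_t(p)$, and $\lim_m \|\theta_t(a)\eta^{(t)}_m - \eta^{(t)}_m \theta_t(a)\|_2 = 0$ for every $a \in \cU(A)$.

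\textbf{Assembling the contradicting net and the main obstacle.} The core task is now to weave these two inputs into the desired vectors. Taking $\cL = L^2(\Mtil)\ot_N L^2(\Mtil)$, viewed as a $B$-$M$-bimodule through $B \subset M \subset \Mtil$ on the left and $M \subset \Mtil$ on the right, the natural candidate is to form $\xi_k$ by compressing $\eta^{(t_k)}_{m_k}$ with $p$ on the left, killing the components carrying a nontrivial $G$-Fourier weight (which vanish in the limit thanks to the decay $\|(\theta_t(a_n))_h\|_2 \to 0$ from the first input), and reading the result inside $pL^2(M)\ot_B\cL$ via the natural inclusion $pL^2(M)\ot_B \bigl(L^2(\Mtil)\ot_N L^2(\Mtil)\bigr) \hookrightarrow L^2(\Mtil)\ot_N L^2(\Mtil)$. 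The transversality inequality \eqref{eq.transverse} is what forces $\|\xi_k\|_2$ to stay bounded below for small $t_k$, while $\|a - \theta_{t}(a)\|_2 \to 0$ as $t \to 0$ for fixed $a$ lets the $\theta_t(A)$-centrality of $\eta^{(t)}_m$ translate, in the limit, into $A$-centrality of $\xi_k$. This is exactly the place where Ioana used the spectral-gap hypothesis $A' \cap (pMp)^\omega = \C 1$, to average the commutators $[a, \eta^{(t)}_m]$ over $A$ and make them vanish; in its absence one must instead choose $(t_k, m_k)$ diagonally and balance the residual Fourier spread of $a_n$ (quantified by $\|(\theta_t(a_n))_h\|_2$) against the distortion incurred by transporting $\eta^{(t)}_m$ from $\theta_t(p)\Mtil\theta_t(p)$ back to $pMp$. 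Making this balancing argument rigorous---and, in particular, ensuring that all three properties of Lemma \ref{lem.rel-amen} survive the diagonal limit---is the principal technical difficulty, and is exactly the part of Ioana's argument that is being replaced.
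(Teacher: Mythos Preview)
Your setup (contraposition, the maximal-projection reduction, extracting almost-central vectors from the relative amenability of $\theta_t(A)$ to $N$) matches the paper. From that point on, however, your outline diverges from the actual argument and, as written, does not close.

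First, a structural problem. You propose to apply Lemma \ref{lem.rel-amen} in the \emph{direct} direction, with $\cL = L^2(\Mtil)\ot_N L^2(\Mtil)$. But the $M$-$M$-bimodule $L^2(\langle\Mtil,e_N\rangle)$ is \emph{not} of the form $L^2(M)\ot_B(\text{something})$: it contains the sub-bimodule $\cK = \overline{\lspan}\{x\,u_g e_N u_g^* : x\in M,\ g\in G\}\cong L^2(M)\ot\ell^2(G)$, an infinite multiple of the trivial bimodule $L^2(M)$. So there is no ``natural inclusion $pL^2(M)\ot_B\cL\hookrightarrow\cL$'' through which to read your $\eta^{(t)}_m$, and a net supported in $\cK$ would say nothing about amenability relative to $B$. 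The paper proceeds by the opposite logic, using Lemma \ref{lem.rel-amen} \emph{contrapositively}, twice. Since by the reduction no corner of $A$ is amenable relative to $B$, any asymptotically $A$-central net living in $\cK^\perp\cong L^2(M)\ot_B\cL$ must tend to zero; this forces the vectors $\xi_i$ into $\cK$, hence into $pL^2(M)\ot\ell^2(G)$ via an explicit isometry $U$. A second contrapositive application, now to $(L^2(\Mtil)\ominus L^2(M))\ot\ell^2(G)\cong L^2(M)\ot_B\cL'$, gives the key uniform estimate $\|\zeta_i-(\theta_s\ot\id)(\zeta_i)\|_2<\eps$ for all small $s$ and all large $i$ simultaneously. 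This uniformity is exactly what replaces Ioana's spectral-gap hypothesis, and it is obtained \emph{from} the reduction, not in spite of it.

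Second, where you write that ``making this balancing argument rigorous\ldots is the principal technical difficulty'', that is the entire content of the theorem, and your sketch does not supply it. The actual mechanism is not a diagonal extraction in $(t_k,m_k)$: one \emph{freezes} a single $t_0$ (chosen after the uniform estimate above), passes to a new net $\eta_j$ indexed only by $(X,Y,\delta)$ with $t=t_0$ held fixed (explicitly \emph{not} a subnet of $\zeta_i$), and then computes, via Lemma \ref{lem.compute},
\[
\langle\theta_{t_0}(a)\,U(\eta_j),\,U(\eta_j)\,\theta_{t_0}(a)\rangle \;\approx\; \langle Q_{t_0}\bigl((a\ot1)\eta_j\bigr),\,\eta_j(a\ot1)\rangle,
\qquad Q_{t_0}=\sum_{\cI\in\cS}P_\cI\ot T_{\cI,t_0}.
\]
The left side is $\approx\tau(p)$ for every $a\in\cU(A)$ by almost-centrality. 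On the right, the Kesten bound $\|T_{\cI,t_0}\|\leq c_{t_0}^{|\cI|-1}$ kills the long-word contribution, and Theorem \ref{thm.IPP} (not the net $(a_n)$ you extract from Theorem \ref{thm.embed}) furnishes \emph{one} unitary $a\in\cU(A)$ with $\|E_M(\theta_s(a))\|_2$ small for some $s\le t_0$; combined with the uniform $\theta_s$-invariance of $\eta_j$, this also kills the short-word part $(P_0\ot1)((a\ot1)\eta_j)$. The resulting numerical contradiction ($\tau(p)-8\eps$ versus $5\eps$ with $\eps=\tau(p)/14$) is the heart of the proof, and its ingredients---the two contrapositive uses of Lemma \ref{lem.rel-amen}, the frozen $t_0$, the operator $Q_{t_0}$, the single unitary from IPP---do not appear in your outline.
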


\begin{proof}
Assume that $A \not\prec_M M_1$ and $A \not\prec_M M_2$. Denote by $z$ the maximal projection in the center of $A' \cap pMp$ such that $Az$ is amenable relative to $B$ inside $M$. If $z = p$, then the theorem is proven. If $z < p$, we replace $p$ by $p-z$ and we replace $A$ by $A(p-z)$. So, $A \not\prec_M M_i$ for all $i \in \{1,2\}$ and for all nonzero projections $q \in \cZ(A' \cap pMp)$, we have that $Aq$ is not amenable relative to $B$. We refer to this last property by saying that ``no corner of $A$ is amenable relative to $B$ inside $M$.'' We will derive a contradiction.

Exactly as in the proof of \cite[Theorem 5.1]{Io12}, we define the index set $I$ consisting of all quadruplets $i = (X,Y,\delta,t)$ where $X \subset \Mtil$ and $Y \subset \cU(A)$ are finite subsets, $\delta \in (0,1)$ and $t \in (0,1)$. We turn $I$ into a directed set by putting $(X,Y,\delta,t) \leq (X',Y',\delta',t')$ if and only if $X \subset X'$, $Y \subset Y'$, $\delta' \leq \delta$ and $t' \leq t$. Since $\theta_t(A)$ is amenable relative to $N$ inside $\Mtil$ for all $t \in (0,1)$, we can choose for every $i = (X,Y,\delta,t)$ in $I$, a vector $\xi_i \in L^2(\langle \Mtil,e_N\rangle)$ such that $\|\xi_i\|_2 \leq 1$ and
\begin{align*}
|\langle x \xi_i , \xi_i \rangle - \tau(x) | \leq \delta &\quad\text{whenever}\;\; x \in X \;\;\text{or}\;\; x = (\theta_t(y) - y)^* (\theta_t(y) - y) \;\;\text{with}\;\; y \in Y \; ,\\
\|\theta_t(y) \xi_i - \xi_i \theta_t(y)\|_2 \leq \delta &\quad\text{whenever}\;\; y \in Y \; .
\end{align*}

It follows that for all $x \in \Mtil$, we have that $\lim_{i \in I} \langle x \xi_i , \xi_i \rangle = \tau(x)$. Since for all $y \in \cU(A)$, we have that $\lim_{t \recht 0} \|\theta_t(y) - y\|_2 = 0$, it follows that for all $y \in \cU(A)$, we have that $\lim_{i \in I} \|y \xi_i - \xi_i y\|_2 = 0$.

Denote by $\cK$ the closed linear span of $\{x \, u_g e_N u_g^* \mid x \in M, g \in G\}$ inside $L^2(\langle \Mtil, e_N \rangle)$. Denote by $e$ the orthogonal projection onto $\cK$. The net of vectors $\xi'_i = p (1-e)(\xi_i)$ satisfies $\limsup_{i \in I} \|x \xi'_i\|_2 \leq \|x\|_2$ for all $x \in pMp$ and $\lim_{i \in I} \|a \xi'_i - \xi'_i a\|_2 = 0$ for all $a \in A$.
By \cite[Lemma 4.2]{Io12}, the $M$-$M$-bimodule $L^2(\langle \Mtil,e_N \rangle) \ominus \cK$ is isomorphic with $L^2(M) \ot_B \cL$ for some $B$-$M$-bimodule $\cL$. Since no corner of $A$ is amenable relative to $B$ inside $M$, it follows from Lemma \ref{lem.rel-amen} that $\lim_{i \in I} \|\xi'_i\|_2 = 0$. So,
$$\lim_{i \in I} \|p \xi_i - e(p \xi_i)\|_2 = 0 \; .$$
Define the isometry
$$U : L^2(M) \ot \ell^2(G) \recht L^2(\langle \Mtil, e_N \rangle) : U (x \ot \delta_g) = x \, u_g e_N u_g^* \; .$$
Note that $UU^* = e$ and
$$U((x \ot 1)\eta(y \ot 1)) = x U(\eta) y \quad\text{for all}\;\; x,y \in M, \eta \in L^2(M) \ot \ell^2(G) \; .$$
We define the net of vectors $(\zeta_i)_{i \in I}$ in $p L^2(M) \ot \ell^2(G)$ given by $\zeta_i = U^*(p \xi_i)$. Note that $\|\zeta_i\|_2 \leq 1$. The properties of the net $(\xi_i)_{i \in I}$ imply that
\begin{alignat*}{2}
\lim_{i \in I} \| p \xi_i - U(\zeta_i) \|_2 &= 0 \; , &&\\
\lim_{i \in I} \langle (x \ot 1) \zeta_i , \zeta_i \rangle &= \tau(pxp) & &\quad\text{for all}\;\; x \in  M  \; ,\\
\lim_{i \in I} \| (a \ot 1) \zeta_i - \zeta_i (a \ot 1) \|_2 &= 0 & &\quad\text{for all}\;\; a \in \cU(A) \; .
\end{alignat*}

We view $p L^2(M) \ot \ell^2(G)$ as a closed subspace of $L^2(\Mtil) \ot \ell^2(G)$. As such, the following claim makes sense.

{\bf Claim.} {\it For every $\eps > 0$, there exists an $s_0 \in (0,1)$ and an $i_0 \in I$ such that}
$$\|\zeta_i - (\theta_s \ot \id)(\zeta_i)\|_2 < \eps \quad\text{\it for all}\;\; s \in [0,s_0] \;\;\text{\it and all}\;\; i \geq i_0 \; .$$

{\bf Proof of the claim.} Assume the contrary. Using \eqref{eq.transverse}, we then find an $\eps > 0$ such that for every $s \in (0,1)$, we have
$$\limsup_{i \in I} \|(\theta_s \ot \id)(\zeta_i) - (E_M \circ \theta_s \ot \id)(\zeta_i)\|_2 \geq \eps \; .$$
Since for every $a \in \cU(A)$, we have $\lim_{s \recht 0} \|\theta_s(a) - a \|_2 = 0$, we can choose a subnet $(\mu_k)$ of the net of vectors
$$\Bigl( (p \ot 1) ( (\id - E_M) \circ \theta_s \ot \id)(\zeta_i)\Bigr)_{(i,s) \in I \times (0,1)}$$
with the properties that $\limsup_k \|(x \ot 1) \mu_k\|_2 \leq \|x\|_2$ for all $x \in pMp$, $\liminf_k \|\mu_k\|_2 \geq \eps$ and $\lim_k \|(a \ot 1)\mu_k - \mu_k (a \ot 1)\|_2 = 0$ for all $a \in \cU(A)$. The $M$-$M$-bimodule $L^2(\Mtil \ominus M) \ot \ell^2(G)$ is isomorphic with $L^2(M) \ot_B \cL$ for some $B$-$M$-bimodule $\cL$. By Lemma \ref{lem.rel-amen}, we reach a contradiction with the assumption that no corner of $A$ is amenable relative to $B$ inside $M$. This proves the claim.

Put $\eps = \tau(p) / 14$. Fix $i_0 \in I$ and $s_0 \in (0,1)$ such that for all $i \geq i_0$ and all $s \in [0,s_0]$, we have that
$$\|p \xi_i - U(\zeta_i)\|_2 < \eps \quad\text{and}\quad \|\zeta_i - (\theta_s \ot \id)(\zeta_i)\|_2 < \eps \; .$$
Write $i_0 = (X_0,Y_0,\delta_0,t_0)$. Enlarging $i_0$ if necessary, we may assume that $p \in X_0$, that $p \in Y_0$ (note that $p$ is the unit element of $\cU(A)$), that $\delta_0 < \eps^2/2$, that $t_0 \leq s_0$ and that $\|\theta_{t_0}(p) - p\|_2 < \eps/2$.

Denote by $J$ the index set consisting of all triplets $j = (X,Y,\delta)$, where $X \subset p M p$ and $Y \subset \cU(A)$ are finite subsets and $\delta \in (0,\delta_0)$. We turn $J$ into a directed set in a similar way as $I$ above. For every $j = (X,Y,\delta)$, we put
$$\eta_j = \zeta_{(X_0 \cup X, Y_0 \cup Y, \delta, t_0)} \; .$$
Note that we use here the fixed index $t_0$. In particular, $(\eta_j)_{j \in J}$ is not a subnet of $(\zeta_i)_{i \in I}$. Also note that $\|\eta_j\|_2 \leq 1$.
We claim that the net $(\eta_j)_{j \in J}$ of vectors in $p L^2(M) \ot \ell^2(G)$ has the following properties.
\begin{alignat}{2}
& \limsup_{j \in J} \| (x \ot 1)\eta_j \|_2 \leq \|x\|_2 &\quad& \text{for all}\;\; x \in M \; , \label{eq.1}\\
& \liminf_{j \in J} |\langle \theta_{t_0}(a) \, U(\eta_j) , U(\eta_j) \, \theta_{t_0}(a) \rangle | \geq \tau(p) - 6\eps &&\text{for all}\;\; a \in \cU(A) \; ,\label{eq.2}\\
& \|\eta_j - (\theta_s \ot \id)(\eta_j)\|_2 \leq \eps &&\text{for all}\;\; s \in [0,t_0] \;\text{and}\; j \in J \; .\label{eq.3}
\end{alignat}
To prove \eqref{eq.1}, fix $x \in M$ and fix $j = (X,Y,\delta)$ with $p x^* x p \in X$. It suffices to prove that
\begin{equation}\label{eq.hulp}
\|(x \ot 1) \eta_j\|_2^2 \leq \|x\|_2^2 + \delta \; .
\end{equation}
Put $i = (X_0 \cup X, Y_0 \cup Y, \delta, t_0)$. We get that
$$
\|(x \ot 1)\eta_j\|_2 = \|(x \ot 1) \zeta_i\|_2 = \|x \, U(\zeta_i)\|_2 = \|x e(p \xi_i)\|_2 = \|e (x p \xi_i)\|_2 \leq \|x p \xi_i\|_2 \; .
$$
But also
$$\|x p \xi_i\|_2^2 = \langle p x^* x p \xi_i , \xi_i \rangle \leq \tau(p x^* x p) + \delta \leq \|x\|_2^2 + \delta$$
because $p x^* x p \in X \subset X_0 \cup X$. So \eqref{eq.hulp} follows and \eqref{eq.1} is proven.

To prove \eqref{eq.2}, fix $a \in \cU(A)$ and fix $j = (X,Y,\delta)$ with $a \in Y$. It suffices to prove that
\begin{equation}\label{eq.hulp2}
|\langle \theta_{t_0}(a) \, U(\eta_j) , U(\eta_j) \, \theta_{t_0}(a) \rangle | \geq \tau(p) - 6\eps - 2\delta \; .
\end{equation}
Put $i = (X_0 \cup X, Y_0 \cup Y, \delta, t_0)$. Since $p \in Y_0 \subset Y_0 \cup Y$, we have
$$\|\theta_{t_0}(p) \xi_i - p \xi_i\|_2^2 \leq \|\theta_{t_0}(p) - p \|_2^2 + \delta \leq \frac{\eps^2}{2} + \delta_0 \leq \eps^2 \; .$$
So $\|\theta_{t_0}(p) \xi_i - p \xi_i\|_2 \leq \eps$. Since $\|p \xi_i - U(\zeta_i)\|_2 \leq \eps$, we get that
$$\|\theta_{t_0}(p) \xi_i - U(\eta_j)\|_2 \leq 2\eps \; .$$
Since $p \in Y_0 \subset Y_0 \cup Y$, we also have that $\|\theta_{t_0}(p) \xi_i - \xi_i \theta_{t_0}(p)\|_2 \leq \delta \leq \delta_0 \leq \eps$. In combination with the previous inequality, this gives
$$\|\xi_i \theta_{t_0}(p) - U(\eta_j)\|_2 \leq 3 \eps \; .$$
In the following computation, we write $y \approx_\eps z$ when $y,z \in \C$ with $|y - z| \leq \eps$. We also use throughout that $\|\zeta_i\|_2 \leq 1 $ and $\|\eta_j\|_2 \leq 1$ for all $i \in I$ and $j \in J$. So,
\begin{alignat*}{2}
\langle \theta_{t_0}(a) \,&  U(\eta_j) , U(\eta_j) \, \theta_{t_0}(a) \rangle &\qquad& \\
& \approx_{2\eps} \langle \theta_{t_0}(a) \xi_i , U(\eta_j) \, \theta_{t_0}(a) \rangle && \text{because}\;\; \|U(\eta_j) - \theta_{t_0}(p) \xi_i\|_2 \leq 2\eps \; ,\\
& \approx_{3\eps} \langle \theta_{t_0}(a) \xi_i , \xi_i \theta_{t_0}(a) \rangle && \text{because}\;\; \|U(\eta_j) - \xi_i \theta_{t_0}(p)\|_2 \leq 3\eps \; ,\\
& \approx_\delta \langle \theta_{t_0}(a) \xi_i , \theta_{t_0}(a) \xi_i \rangle && \text{because}\;\; \|\xi_i \theta_{t_0}(a) - \theta_{t_0}(a) \xi_i\|_2 \leq \delta \;\;\text{since}\;\; a \in Y \; ,\\
& = \langle \theta_{t_0}(p) \xi_i , \xi_i \rangle && \\
& \approx_\eps \langle p \xi_i , \xi_i \rangle && \text{because}\;\; \|\theta_{t_0}(p) \xi_i - p \xi_i\|_2 \leq \eps \; ,\\
& \approx_\delta \tau(p) && \text{because}\;\; p \in X_0 \subset X_0 \cup X \; .
\end{alignat*}
From this computation, \eqref{eq.hulp2} follows immediately. So also \eqref{eq.2} is proven.

Finally \eqref{eq.3} follows because $\|\zeta_i - (\theta_s \ot \id)(\zeta_i)\|_2 \leq \eps$ for all $s \in [0,t_0]$ and all $i \geq i_0$.

Denote $\eta_j = \sum_{g \in G} \eta_{j,g} \ot \delta_g$, where $\eta_{j,g} \in L^2(M)$ and where $(\delta_g)_{g \in G}$ is the canonical orthonormal basis of $\ell^2(G)$. Recall that for every $x \in L^2(\Mtil)$ and $h \in G$, we denote $(x)_h = E_N(x u_h^*)$.

For every $a \in \cU(A)$, we have that
$$\sum_{g,h \in G} \|(\theta_{t_0}(a \eta_{j,g}))_h\|_2^2 = \sum_{g \in G} \|\theta_{t_0}(a \eta_{j,g})\|_2^2 = \sum_{g \in G} \|a \eta_{j,g}\|_2^2  = \|(a \ot 1) \eta_j\|_2^2 \leq 1 \; .$$
Because the subspaces $(L^2(N) u_{hg} e_N u_g^*)_{h,g \in G}$ of $L^2(\langle \Mtil, e_N \rangle)$ are orthogonal, the formula
$$\xi(a,j) = \sum_{g,h \in G} (\theta_{t_0}(a \eta_{j,g}))_h \, u_{hg} e_N u_g^*$$
provides a well defined vector in $L^2(\langle \Mtil , e_N \rangle)$ with $\|\xi(a,j)\|_2 \leq 1$. We claim that for every $a \in \cU(A)$ and all $j \in J$, we have
\begin{equation}\label{eq.estimate}
\| \theta_{t_0}(a) \, U(\eta_j) - \xi(a,j) \|_2 \leq \eps \; .
\end{equation}
This follows because
\begin{align*}
\| \theta_{t_0}(a) \, U(\eta_j) - \xi(a,j) \|_2^2 &= \sum_{g,h \in G} \|(\theta_{t_0}(a) \eta_{j,g})_h - (\theta_{t_0}(a \eta_{j,g}))_h\|_2^2 \\
&= \sum_{g \in G} \| \theta_{t_0}(a) \eta_{j,g} - \theta_{t_0}(a \eta_{j,g})\|_2^2 \\
&= \sum_{g \in G} \|\eta_{j,g} - \theta_{t_0}(\eta_{j,g})\|_2^2 \\
&= \|\eta_j - (\theta_{t_0} \ot \id)(\eta_j)\|_2^2 \leq \eps^2 \; .
\end{align*}
So \eqref{eq.estimate} is proven.

We similarly define the vectors $\xi'(a,j) \in L^2(\langle \Mtil,e_N \rangle)$ by the formula
$$\xi'(a,j) = \sum_{g,h \in G} (\theta_{t_0}(\eta_{j,g} a))_h \; u_{g} e_N u_g^*u_h = \sum_{g,h \in G} (\theta_{t_0}(\eta_{j,hg} a))_h \; u_{hg} e_N u_g^*$$
and get that $\|\xi'(a,j)\|_2 \leq 1$ and
\begin{equation}\label{eq.est2}
\| U(\eta_j) \, \theta_{t_0}(a) - \xi'(a,j) \|_2 \leq \eps
\end{equation}
for all $a \in \cU(A)$ and all $j \in J$.

Combining \eqref{eq.hulp2}, \eqref{eq.estimate} and \eqref{eq.est2}, we find that for all $a \in \cU(A)$,
$$\limsup_{j \in J} |\langle \xi(a,j) , \xi'(a,j)\rangle| \geq \tau(p) - 8\eps \; .$$
We now apply Lemma \ref{lem.compute} and the notation introduced before the formulation of that lemma. For every $a \in \cU(A)$ and $j \in J$, we have that
\begin{align*}
\langle \xi(a,j) , \xi'(a,j)\rangle &= \sum_{g,h \in G} \langle (\theta_{t_0}(a \eta_{j,g}))_h , (\theta_{t_0}(\eta_{j,hg} a))_h \rangle \\
&= \sum_{g,h \in G} \sum_{\cI \in \cS} \langle P_{\cI}(a \eta_{j,g}), \eta_{j,hg} a \rangle \; \mu_{\cI,t_0}(h) \\
&= \langle Q_{t_0}((a \ot 1)\eta_j), \eta_j (a \ot 1)\rangle \; ,
\end{align*}
where $Q_{t_0} \in B(L^2(M) \ot \ell^2(G))$ is defined by
$$Q_{t_0} = \sum_{\cI \in \cS} P_{\cI} \ot T_{\cI,t_0} \; .$$
So we get that for all $a \in \cU(A)$,
\begin{equation}\label{eq.tocontra}
\limsup_{j \in J} |\langle Q_{t_0}((a \ot 1)\eta_j) , \eta_j (a \ot 1) \rangle| \geq \tau(p) - 8 \eps \; .
\end{equation}

Fix a large enough integer $n_0$ such that $c_{t_0}^{n_0-1} \leq \eps$. So, $\|T_{\cI,t_0}\| \leq \eps$ whenever $|\cI| \geq n_0$. Denote by
$$P_0 = \sum_{\cI \in \cS, |\cI| < n_0} P_\cI$$
the projection onto the closed linear span of ``all words of length $< n_0$''.

We claim that there exists a unitary $a \in \cU(A)$ such that
\begin{equation}\label{eq.claima}
\limsup_{j \in J} \|(P_0 \ot 1)((a \ot 1)\eta_j)\|_2 \leq 4\eps \; .
\end{equation}
To prove this claim, we first use \eqref{eq.EMtheta} to fix $0 < s \leq t_0$ close enough to zero such that
$$\|(P_0 \ot 1)(\eta)\|_2 \leq 2 \|(E_M \ot \id)((\theta_s \ot \id)(\eta))\|_2 \quad\text{for all}\;\; \eta \in L^2(M) \ot \ell^2(G) \; .$$
Since $A \not\prec M_1$ and $A \not\prec M_2$, it follows from Theorem \ref{thm.IPP} that we can choose $a \in \cU(A)$ such that $\|E_M(\theta_s(a))\|_2 \leq \eps$. We prove that this unitary $a \in \cU(A)$ satisfies \eqref{eq.claima}.

From \eqref{eq.3}, we know that $\|\eta_j - (\theta_s \ot \id)(\eta_j)\|_2 \leq \eps$ for all $j \in J$. It follows that
$$\|(\theta_s \ot \id)((a \ot 1)\eta_j) - (\theta_s(a) \ot 1) \eta_j\|_2 \leq \eps \quad\text{for all}\;\; j \in J \; .$$
So for all $j \in J$, we get that
\begin{align*}
\|(P_0 \ot 1)((a \ot 1)\eta_j)\|_2 & \leq 2 \|(E_M \ot \id)((\theta_s \ot \id)((a \ot 1)\eta_j))\|_2 \\
& \leq 2 \|(E_M \ot \id)((\theta_s(a) \ot 1) \eta_j)\|_2 + 2 \eps \\
& = 2 \|(E_M(\theta_s(a)) \ot 1) \eta_j \|_2 + 2 \eps \; .
\end{align*}
Using \eqref{eq.1}, we get that
$$\limsup_{j \in J} \|(P_0 \ot 1)((a \ot 1)\eta_j)\|_2 \leq 2 \|E_M(\theta_s(a))\|_2 + 2 \eps \leq 4 \eps \; .$$
So the claim in \eqref{eq.claima} is proven and we fix the unitary $a \in \cU(A)$ satisfying \eqref{eq.claima}.

We now deduce that
\begin{equation}\label{eq.almost-contra}
\limsup_{j \in J} \|Q_{t_0}((a \ot 1)\eta_j)\|_2 \leq 5 \eps \; .
\end{equation}
Indeed, since $\|T_{\cI,t_0}\| \leq 1$ for all $\cI \in \cS$ and $\|T_{\cI,t_0}\| \leq \eps$ for all $\cI \in \cS$ with $|\cI| \geq n_0$, we get that
\begin{align*}
\|Q_{t_0}((a \ot 1)\eta_j)\|_2^2 &= \sum_{\cI \in \cS} \|(P_\cI \ot T_{\cI,t_0})((a \ot 1)\eta_j)\|_2^2 \\
&\leq \sum_{\cI \in \cS, |\cI| < n_0} \|(P_\cI \ot 1)((a \ot 1)\eta_j)\|_2^2 + \eps^2 \sum_{\cI \in \cS, |\cI| \geq n_0} \|(P_\cI \ot 1)((a \ot 1)\eta_j)\|_2^2 \\
&\leq \|(P_0 \ot 1)((a \ot 1)\eta_j)\|_2^2 + \eps^2 \|(a \ot 1)\eta_j\|_2^2 \; .
\end{align*}
Taking the $\limsup$ over $j \in J$ and using \eqref{eq.claima} and \eqref{eq.1}, we arrive at
$$\limsup_{j \in J} \|Q_{t_0}((a \ot 1)\eta_j)\|_2^2 \leq 17 \eps^2$$
and \eqref{eq.almost-contra} follows. But \eqref{eq.almost-contra} implies that
$$\limsup_{j \in J} |\langle Q_{t_0}((a \ot 1)\eta_j) , \eta_j (a \ot 1) \rangle| \leq 5\eps \; .$$
Since $\eps = \tau(p)/14$, we have $5\eps < \tau(p) - 8 \eps$ and so we obtained a contradiction with \eqref{eq.tocontra}.
\end{proof}

\section{Proof of Theorem \ref{thm.AFP} and a version for HNN extensions}

\begin{proof}[Proof of Theorem \ref{thm.AFP}]
We use the malleable deformation $\theta_t$ of $M \subset \Mtil$ as explained in Section \ref{sec.malleable}. Write $G = \F_2$ and $\Mtil = N \rtimes G$ as in Section \ref{sec.ioana}. By assumption, $A$ is amenable relative to one of the $M_i$ inside $M$. A fortiori, $A$ is amenable relative to $M_i$ inside $\Mtil$. Fix $t \in (0,1)$. Applying $\theta_t$, we get that $\theta_t(A)$ is amenable relative to $\theta_t(M_i)$ inside $\Mtil$. Since $\theta_t(M_i)$ is unitarily conjugate to $M_i$ and $M_i \subset N$, it follows that $\theta_t(A)$ is amenable relative to $N$ inside $\Mtil$.

Put $P := \cN_{pMp}(A)\dpr$. We apply \cite[Theorem 1.6 and Remark 6.3]{PV11} to the crossed product $\Mtil = N \rtimes G$ and the subalgebra $\theta_t(A)$ of this crossed product. We conclude that at least one of the following statements holds: $\theta_t(A) \prec_{\Mtil} N$ or $\theta_t(P)$ is amenable relative to $N$ inside $\Mtil$. Since this holds for every $t \in (0,1)$, we get that at least one of the following is true.
\begin{itemize}
\item There exists a $t \in (0,1)$ such that $\theta_t(A) \prec_{\Mtil} N$.
\item For every $t \in (0,1)$, we have that $\theta_t(P)$ is amenable relative to $N$ inside $\Mtil$.
\end{itemize}
In the first case, Theorem \ref{thm.embed} implies that $A \prec_M B$ or $P \prec_M M_i$ for some $i \in \{1,2\}$. In the second case, Theorem \ref{thm.main-tech} implies that $P \prec_M M_i$ for some $i \in \{1,2\}$, or that $P$ is amenable relative to $B$ inside $M$.
\end{proof}

By \cite{Ue07}, HNN extensions can be viewed as corners of amalgamated free products. Since Theorem \ref{thm.AFP} has no particular assumptions on the inclusions $B \subset M_i$, we can immediately deduce the following result.

\begin{theorem}\label{thm.HNN}
Let $M = \HNN(M_0,B,\theta)$ be the HNN extension of the tracial von Neumann algebra $(M_0,\tau)$ with von Neumann subalgebra $B \subset M_0$ and trace preserving embedding $\theta : B \recht M_0$. Let $p \in M$ be a nonzero projection and $A \subset pMp$ a von Neumann subalgebra that is amenable relative to $M_0$ inside $M$. Then at least one of the following statements holds.
\begin{itemize}
\item $A \prec_M B$.
\item $\cN_{pMp}(A)\dpr \prec_M M_0$.
\item We have that $\cN_{pMp}(A)\dpr$ is amenable relative to $B$ inside $M$.
\end{itemize}
\end{theorem}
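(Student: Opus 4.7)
The plan is to realize the HNN extension as a corner of an amalgamated free product and then simply invoke Theorem \ref{thm.AFP}. By \cite{Ue07}, there exists an amalgamated free product $\Mtil = \Mtil_1 *_{\Btil} \Mtil_2$ of tracial von Neumann algebras and a nonzero projection $q \in \Btil \subset \Mtil$ with the following properties: $M$ is identified with $q \Mtil q$, the subalgebra $M_0 \subset M$ corresponds to a corner of $\Mtil_1$ (more precisely, $M_0 = q \Mtil_1 q$), and the subalgebra $B \subset M$ corresponds to $q \Btil q$. In particular, Popa's intertwining relations and relative amenability relations between subalgebras of $M$ translate verbatim into the same relations inside $\Mtil$, using the basic fact that for subalgebras $P, Q \subset q\Mtil q$ one has $P \prec_{q\Mtil q} Q$ iff $P \prec_{\Mtil} Q$ and similarly for relative amenability (see e.g.\ the standard corner arguments).

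I would then proceed as follows. View $A \subset pMp = p q \Mtil q p \subset \Mtil$. The hypothesis that $A$ is amenable relative to $M_0$ inside $M$ translates, via the above identification, to $A$ being amenable relative to $q\Mtil_1 q$ inside $q\Mtil q$, and hence to $A$ being amenable relative to $\Mtil_1$ inside $\Mtil$ (relative amenability passes from a corner $q\Mtil_1 q$ to $\Mtil_1$ since the inclusion is trivially compatible with the basic construction). At this point, the hypotheses of Theorem \ref{thm.AFP} for the AFP $\Mtil = \Mtil_1 *_{\Btil} \Mtil_2$ with the subalgebra $A \subset p\Mtil p$ are satisfied.

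Applying Theorem \ref{thm.AFP}, one of three alternatives holds for $\Mtil$. I would then translate each back to $M$:
\begin{itemize}
\item If $A \prec_{\Mtil} \Btil$, combine with the (easy) fact that $A \subset q \Mtil q$ to get $A \prec_{\Mtil} q\Btil q = B$, hence $A \prec_M B$.
\item If $\cN_{p\Mtil p}(A)\dpr \prec_{\Mtil} \Mtil_i$ for some $i \in \{1,2\}$, note that $\cN_{p\Mtil p}(A)\dpr = \cN_{pMp}(A)\dpr$ since $pMp = p\Mtil p$. Because $\Mtil_2$ lies (up to unitary conjugacy inside $\Mtil$) in $M_0 \subset M$ as well (this is precisely the content of Ueda's identification: both $\Mtil_1$ and $\Mtil_2$ correspond, through inner perturbations, to subalgebras contained in the $M_0$-part of $M$), we conclude $\cN_{pMp}(A)\dpr \prec_M M_0$.
\item If $\cN_{p\Mtil p}(A)\dpr$ is amenable relative to $\Btil$ inside $\Mtil$, then by the corner-compatibility of relative amenability we obtain that $\cN_{pMp}(A)\dpr$ is amenable relative to $q\Btil q = B$ inside $q\Mtil q = M$.
\end{itemize}

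The main point to verify carefully is the second bullet: one must confirm that in Ueda's realization, both amalgamated factors $\Mtil_i$ correspond under the identification $M = q\Mtil q$ to pieces contained (up to intertwining) in $M_0$, so that $\prec_{\Mtil} \Mtil_i$ collapses to $\prec_M M_0$. This is exactly the content of \cite{Ue07} and is the only place where some bookkeeping is needed; the rest is a formal transport of the three-way dichotomy of Theorem \ref{thm.AFP} through the corner identification.
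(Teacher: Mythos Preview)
Your proposal is correct and follows exactly the paper's approach: realize the HNN extension as a corner of an amalgamated free product via Ueda's construction and transport the trichotomy of Theorem~\ref{thm.AFP} through that identification. One minor sharpening worth recording: in Ueda's explicit realization the paper uses, one takes $\Mtil_1 = \M_2(\C)\otimes M_0$, $\Mtil_2 = \M_2(\C)\otimes B$, $\Btil = B\oplus B$, and $q=e_{11}=f_{11}$; then $q\Mtil_2 q = B$, not merely a copy of $M_0$, so the paper actually records the equivalence $\Psi(Q)\prec_{\cM}\Mtil_2 \Leftrightarrow Q\prec_M B$, which is stronger than the $\prec_M M_0$ you extract in your second bullet --- but of course either conclusion lands in one of the three listed alternatives, so your argument goes through unchanged.
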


\begin{proof}
By \cite[Proposition 3.1]{Ue07}, we can view $M = \HNN(M_0,B,\theta)$ as a corner of an amalgamated free product. More precisely, we put $M_1 = \M_2(\C) \ot M_0$ and $M_2 = \M_2(\C) \ot B$. We consider $B_0 = B \oplus B$ as a subalgebra of both $M_1$ and $M_2$, where the embedding $B_0 \hookrightarrow M_2$ is diagonal and the embedding $B_0 \hookrightarrow M_1$ is given by $b \oplus d \mapsto b \oplus \theta(d)$. We denote by $e_{ij}$ the matrix units in $M_1$ and by $f_{ij}$ the matrix units in $M_2$.
The HNN extension $M$ is generated by $M_0$ and the stable unitary $u$. There is a unique surjective $*$-isomorphism
$$\Psi : \HNN(M_0,B,\theta) \recht e_{11} \bigl( M_1 *_{B_0} M_2 \bigr) e_{11} : \begin{cases} \Psi(x) = e_{11}x &\;\;\text{for all}\;\; x \in M_0 \; ,\\ \Psi(u) = e_{12} f_{21} \; .\end{cases}$$
Note that in the amalgamated free product, $e_{11} = f_{11}$ and $e_{22} = f_{22}$. Therefore $e_{12} f_{21}$ is really a unitary.

Denote $\cM := M_1 *_{B_0} M_2$. Whenever $Q \subset pMp$ is a von Neumann subalgebra, one checks that
\begin{itemize}
\item $Q \prec_M B$ iff $\Psi(Q) \prec_\cM B_0$ iff $\Psi(Q) \prec_\cM M_2$.
\item $Q \prec_M M_0$ iff $\Psi(Q) \prec_M M_1$.
\item $Q$ is amenable relative to $B$ inside $M$ iff $\Psi(Q)$ is amenable relative to $B_0$ inside $\cM$.
\end{itemize}
So Theorem \ref{thm.HNN} is a direct consequence of Theorem \ref{thm.AFP}.
\end{proof}

\section{Cartan-rigidity for amalgamated free product groups and HNN extensions}

Recall from \cite{PV11} that a countable group $\Gamma$ is called $\cC$-rigid if for every free ergodic pmp action $\Gamma \actson (X,\mu)$, we have that $L^\infty(X)$ is the unique Cartan subalgebra of $L^\infty(X) \rtimes \Gamma$ up to unitary conjugacy. Observe that $\cC$-rigidity is an immediate consequence of the following stronger property $(\ast)$~:
\begin{equation}\tag{$\ast$}
\parbox[c]{13.5cm}{For every trace preserving action $\Gamma \actson (B,\tau)$, projection $p \in M = B \rtimes \Gamma$ and amenable von Neumann subalgebra $A \subset pMp$ with $\cN_{pMp}(A)\dpr = pMp$, we have that $A \prec B$.}
\end{equation}
As was shown in the proof of \cite[Theorem 1.1]{PV12}, a direct product $\Gamma_1 \times \cdots \times \Gamma_n$ of finitely many groups $\Gamma_i$ with property $(\ast)$, is $\cC$-rigid.

Property $(\ast)$ was shown to hold, among other groups, for all weakly amenable $\Gamma$ with $\beta_1^{(2)}(\Gamma) > 0$ in \cite[Theorem 7.1]{PV11} and for all nonelementary hyperbolic $\Gamma$ in \cite[Theorem 1.4]{PV12}. In \cite[Theorem 7.1]{Io12}, property $(\ast)$ was proven for a large class of amalgamated free products and in \cite[Proof of Theorem 8.1]{DI12}, for a large class of HNN extensions. For completeness, we show how to deduce these last two results from Theorem \ref{thm.AFP}, resp.\ Theorem \ref{thm.HNN}.

\begin{theorem}\label{thm.Crigid}
The following groups satisfy property $(\ast)$ and, in particular, are $\cC$-rigid.
\begin{enumerate}
\item (\cite[Theorem 7.1]{Io12})\hspace{2ex} Amalgamated free products $\Gamma = \Gamma_1 *_\Sigma \Gamma_2$ with $[\Gamma_1 : \Sigma] \geq 3$ and $[\Gamma_2 : \Sigma] \geq 2$ that admit elements $g_1,\ldots,g_n \in \Gamma$ with $\bigl|\bigcap_{k=1}^n g_k \Sigma g_k^{-1}\bigr| < \infty$.

\item (\cite[Proof of Theorem 8.1]{DI12})\hspace{2ex} HNN extensions $\Gamma = \HNN(\Gamma_1,\Sigma,\theta)$, given by a subgroup $\Sigma < \Gamma_1$ and an injective group homomorphism $\theta : \Sigma \recht \Gamma_1$, such that $\Sigma \neq \Lambda \neq \theta(\Sigma)$ and such that there exist $g_1,\ldots,g_n \in \Gamma$ with $\bigl|\bigcap_{k=1}^n g_k \Sigma g_k^{-1}\bigr| < \infty$.
\end{enumerate}
\end{theorem}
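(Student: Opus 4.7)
The plan is to derive property $(\ast)$ from Theorem \ref{thm.AFP} for (1) and from Theorem \ref{thm.HNN} for (2), by recognizing $M = B \rtimes \Gamma$ as an amalgamated free product, respectively HNN extension, of crossed products and then running the dichotomy to rule out every alternative except $A \prec_M B$.

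For part (1), set $M_i' := B \rtimes \Gamma_i$ and $B' := B \rtimes \Sigma$, so that $M = M_1' *_{B'} M_2'$. Given amenable $A \subset pMp$ with $\cN_{pMp}(A)\dpr = pMp$, amenability of $A$ ensures it is relatively amenable to $M_1'$ inside $M$, so Theorem \ref{thm.AFP} applies and yields one of three alternatives: (a) $A \prec_M B'$, (b) $pMp \prec_M M_i'$ for some $i \in \{1,2\}$, or (c) $pMp$ is amenable relative to $B'$ inside $M$.

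To rule out (c), the characterization of relative amenability recalled in the introduction makes it equivalent to the existence of a $\Gamma$-invariant mean on $\Gamma/\Sigma$. The index conditions $[\Gamma_1:\Sigma] \geq 3$ and $[\Gamma_2:\Sigma] \geq 2$, together with a standard ping-pong argument on coset transversals in $\Gamma_1$ and $\Gamma_2$, produce a copy of $\F_2 \leq \Gamma$; combined with the hypothesis that $\bigcap_k g_k \Sigma g_k^{-1}$ is finite, one can arrange $\F_2$ to act on $\Gamma/\Sigma$ with finite stabilizers, so the invariant mean pushes forward to an $\F_2$-invariant mean on $\F_2$, contradicting non-amenability of $\F_2$. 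For (a) and (b), the idea is to combine the intertwining $A \prec_M Q$ (with $Q = B'$ in case (a) and $Q = M_i'$ in case (b)) with the regularity assumption $\cN_{pMp}(A)\dpr = pMp$: spreading the intertwining across the conjugates $g_k Q g_k^{-1}$ via the normalizer and intersecting yields $A \prec_M B \rtimes F$ with $F \subset \bigcap_k g_k \Sigma g_k^{-1}$ finite, and since $B \rtimes F$ is a finite-index extension of $B$, this gives $A \prec_M B$. This ``combining intertwinings across conjugates'' step follows standard techniques in the spirit of \cite[\S 2]{Io12} and \cite{PV12}.

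Part (2) proceeds identically: write $M = B \rtimes \Gamma = \HNN(B \rtimes \Gamma_1, B \rtimes \Sigma, \widetilde\theta)$, apply Theorem \ref{thm.HNN}, and run the same case analysis, with the hypothesis $\Sigma \neq \Gamma_1 \neq \theta(\Sigma)$ again providing $\F_2 \leq \Gamma$ for the ping-pong argument. The main obstacle I anticipate is the ``combining intertwinings'' step: because $\cN_{pMp}(A)\dpr = pMp$ provides unitaries in $pMp$ rather than genuine group elements of $\Gamma$, spreading $A \prec_M Q$ to the conjugates $g_k Q g_k^{-1}$ requires a careful quasi-normalizer analysis of $Q$ inside $M$ together with a judicious use of the finite-intersection hypothesis. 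Once this bookkeeping is in place --- essentially following \cite{Io12,PV12} --- the remaining pieces of the proof are direct applications of the dichotomies proven in this paper.
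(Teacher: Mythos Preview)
Your overall strategy matches the paper's: write $M = B\rtimes\Gamma$ as an amalgamated free product (resp.\ HNN extension) of crossed products, apply Theorem \ref{thm.AFP} (resp.\ Theorem \ref{thm.HNN}), rule out the alternatives other than $A \prec_M B\rtimes\Sigma$, and then pass from $\Sigma$ to the finite intersection $\Sigma_0 = \bigcap_k g_k\Sigma g_k^{-1}$.

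There is, however, a genuine gap in your handling of alternative (b). You propose to treat (a) and (b) uniformly by ``spreading'' $A\prec_M Q$ across conjugates $g_kQg_k^{-1}$ and intersecting down to $B\rtimes F$ with $F\subset\bigcap_k g_k\Sigma g_k^{-1}$. This works for $Q=B'=B\rtimes\Sigma$, but for $Q=M_i'=B\rtimes\Gamma_i$ the conjugates are $B\rtimes g_k\Gamma_i g_k^{-1}$ and their intersection is governed by $\bigcap_k g_k\Gamma_i g_k^{-1}$, which \emph{contains} $\bigcap_k g_k\Sigma g_k^{-1}$ rather than being contained in it; you have no hypothesis making it finite. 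The paper avoids this entirely: alternative (b) reads $pMp\prec_M B\rtimes\Gamma_i$, and since $[\Gamma:\Gamma_i]=\infty$ under the index assumptions, one has $M\not\prec_M B\rtimes\Gamma_i$ outright (take unitaries $u_g$ with $g$ escaping finitely many double cosets of $\Gamma_i$). So (b) is eliminated in one line, not absorbed into the spreading argument.

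Two smaller remarks. For (c), the paper simply observes that $\Sigma$ is not co-amenable in $\Gamma$; this already follows from the index conditions via the Bass--Serre tree (the edge set is $\Gamma/\Sigma$, and the tree is non-elementary), so the finite-intersection hypothesis is not needed here and your ping-pong-with-finite-stabilizers detour is unnecessary. For (a), your ``combining intertwinings across conjugates'' is exactly what the paper does, but the precise mechanism is the $z(\Sigma)$-projection machinery of \cite[Propositions 6 and 8]{HPV10}: since $\cN_{pMp}(A)\dpr=pMp$, the projection $z(\Sigma)$ lies in $\cZ(M)$, hence $z(g_k\Sigma g_k^{-1})=u_{g_k}z(\Sigma)u_{g_k}^*=z(\Sigma)$, and then $z(\Sigma_0)=\prod_k z(g_k\Sigma g_k^{-1})=z(\Sigma)\neq 0$, giving $A\prec_M B\rtimes\Sigma_0$ and thus $A\prec_M B$. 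Citing this explicitly would remove the vagueness you flag as the ``main obstacle''.
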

\begin{proof}
Let $\Gamma \actson (B,\tau)$ be a trace preserving action and put $M = B \rtimes \Gamma$. Let $p \in M$ be a projection and $A \subset pMp$ an amenable von Neumann subalgebra with $\cN_{pMp}(A)\dpr = pMp$. In the first case, $M$ is the amalgamated free product of $B \rtimes \Gamma_1$ and $B \rtimes \Gamma_2$ over $B \rtimes \Sigma$. In the second case, $M$ is the HNN extension of $B \rtimes \Gamma_1$ over $B \rtimes \Sigma$. In both cases, $\Gamma_i < \Gamma$ has infinite index and $\Sigma < \Gamma$ is not co-amenable. So it follows from Theorem \ref{thm.AFP} and Theorem \ref{thm.HNN} that $A \prec B \rtimes \Sigma$.

Define the projection $z(\Sigma) \in M \cap (B \rtimes \Sigma)'$ as in \cite[Section 4]{HPV10}. Since $A \prec B \rtimes \Sigma$, we have that $z(\Sigma) \neq 0$. From \cite[Proposition 8]{HPV10}, we know that $z(\Sigma)$ belongs to the center of $M$. Take $g_1,\ldots,g_n \in \Gamma$ such that $\Sigma_0 = \bigcap_{k=1}^n g_k \Sigma g_k^{-1}$ is a finite group. We have $z(g_k \Sigma g_k^{-1}) = u_{g_k} z(\Sigma) u_{g_k}^* = z(\Sigma)$, because $z(\Sigma)$ belongs to the center of $M$. It then follows from \cite[Proposition 6]{HPV10} that
$$z(\Sigma_0) = z(g_1 \Sigma g_1^{-1}) \cdots z(g_n \Sigma g_n^{-1}) = z(\Sigma) \neq 0 \; .$$
So $A \prec B \rtimes \Sigma_0$. Since $\Sigma_0$ is finite, we conclude that $A \prec B$.
\end{proof}

\section{Proof of Corollary \ref{cor.product-equiv}}

\begin{proof}[Proof of Corollary \ref{cor.product-equiv}]
Write $M_i = L(\cR_i)$ and $B = L^\infty(X)$. Note that $L(\cR) = M_1 *_B M_2$. Corollary \ref{cor.product-equiv} is a direct consequence of Theorem \ref{thm.AFP}, provided that we prove the following two statements.
\begin{enumerate}
\item $M \not\prec_M M_i$.
\item $M$ is not amenable relative to $B$, i.e.\ $M$ is not amenable itself.
\end{enumerate}
Since $|\cR_1 \cdot x| \geq 3$ for a.e.\ $x \in X$ and using e.g.\ \cite[Lemma 2.6]{IKT08}, we can take unitaries $u,v \in \cU(M_1)$ such that $E_B(u) = E_B(v) = E_B(u^* v) = 0$.
We similarly find a unitary $w \in \cU(M_2)$ with $E_B(w) = 0$.

Proof of 1. Define the sequence of unitaries $w_n \in \cU(M)$ given by $w_n = (uw)^n$. Denote by $X_m \subset M$ the linear span of all products of at most $m$ elements from $M_1 \ominus B$ and $M_2 \ominus B$. Whenever $2n > 2m+1$ and $x,y \in X_m$, a direct computation yields that $E_{M_i}(x w_n y) = 0$. So it follows that $\lim_n \|E_{M_i}(x w_n y)\|_2 = 0$ for all $x,y \in M$ and 1 follows.

Proof of 2. Assume that $M$ is amenable and take an $M$-central state $\Omega$ on $B(L^2(M))$. Define $K_1$ as the closed linear span of $B$ and all products of the form $x_1 x_2 \cdots x_n$ with $x_1 \in M_1 \ominus B$, $x_2 \in M_2 \ominus B$, etc. Define $K_2$ as the closed linear span of all products of the form $y_1 y_2 \cdots y_n$ with $y_1 \in M_2 \ominus B$, $y_2 \in M_1 \ominus B$, etc. By construction, $L^2(M) = K_1 \oplus K_2$. Denote by $e_i$ the orthogonal projection of $L^2(M)$ onto $K_i$. It follows that $u e_2 u^*$ and $v e_2 v^*$ are orthogonal and lie under $e_1$. Hence,
$2 \Omega(e_2) = \Omega(u e_2 u^*) + \Omega(v e_2 v^*) \leq \Omega(e_1)$. On the other hand, $w e_1 w^* \leq e_2$, implying that $\Omega(e_1) = \Omega(w e_1 w^*) \leq \Omega(e_2)$. Altogether it follows that $\Omega(e_1) = \Omega(e_2) = 0$. Since $1 = e_1 + e_2$ and $\Omega(1) = 1$, we reached a contradiction.
\end{proof}

\section{Proof of Corollary \ref{cor.strong-solid}}

\begin{proof}[Proof of Corollary \ref{cor.strong-solid}]
Let $A \subset M$ be a diffuse amenable von Neumann subalgebra. Denote $P = \cN_M(A)\dpr$ and assume that $P$ is not amenable. Take a nonzero central projection $z \in \cZ(P)$ such that $Pz$ has no amenable direct summand. Since $Pz \subset \cN_{zMz}(Az)\dpr$, it follows from Theorem \ref{thm.AFP} that one of the following statements holds.
\begin{enumerate}
\item $Az \prec_M B$.
\item $Pz \prec_M M_i$ for some $i \in \{1,2\}$.
\item $Pz$ is amenable relative to $B$ inside $M$.
\end{enumerate}
It suffices to prove that each of the three statements is false.

1. Observe that the inclusion $M_2 \subset M$ is mixing. To prove this, fix a sequence $b_n$ in the unit ball of $M_2$ such that $b_n \recht 0$ weakly. We must show that $\lim_n \|E_{M_2}(x^* b_n y)\|_2 = 0$ for all $x,y \in M \ominus M_2$. It suffices to prove this when $x=x_1 x_2 \cdots x_n$ and $y=y_1 y_2 \cdots y_m$ with $n,m \geq 2$, $x_1,y_1 \in M_2$, $x_2,y_2 \in M_1 \ominus B$, $x_3,y_3 \in M_2 \ominus B$, etc. But then
$$E_{M_2}(x^* b_n y) = E_{M_2}(x_n^* \cdots x_3^* E_B(x_2^* E_B(x_1^* b_n y_1) y_2) y_3 \cdots y_n)$$
and the conclusion follows because $E_B(x_1^* b_n y_1) \recht 0$ weakly and the inclusion $B \subset M_1$ is mixing.

Assume that 1 holds. Then certainly $Az \prec_M M_2$. Since the inclusion $M_2 \subset M$ is mixing, it follows from \cite[Lemma 9.4]{Io12} that $Pz \prec_M M_2$. So statement 2 holds and we proceed to the next point.

2. Observe that 2 holds. We then find in particular a nonzero projection $p \in M_n(\C) \ot M_i$ and a normal unital $*$-homomorphism $\vphi : Pz \recht p(M_n(\C) \ot M_i)p$. Then $\vphi(Az)$ is a diffuse von Neumann subalgebra of $p(M_n(\C) \ot M_i)p$ whose normalizer contains $\vphi(Pz)$. Since $Pz$ has no amenable direct summand, $\vphi(Pz)$ is nonamenable. Hence $p(M_n(\C) \ot M_i)p$ is not strongly solid. Since $M_i$ is strongly solid, this contradicts  the stability of strong solidity under amplifications as proven in \cite[Proposition 5.2]{Ho09}.

3. Since $B$ is amenable, 3 implies that $Pz$ is amenable, contradicting our assumptions.
\end{proof}

\section{\boldmath W$^*$-superrigid actions of type III}\label{sec.Wstar-superrigid}

In the same way as \cite[Theorem A]{HV12} was deduced from the results in \cite{PV12}, we can deduce from Theorem \ref{thm.AFP} the following type III uniqueness statement for Cartan subalgebras. Our theorem is a generalization of \cite[Theorem D]{BHR12}, where the same result was proven under the assumption that $\Sigma$ is a finite group.

Rather than looking for the most general statement possible, we provide a more ad hoc formulation that suffices to prove the W$^*$-superrigidity of the type III$_1$ actions in Proposition \ref{prop.Wstar-superrigid} (see also Remark \ref{rem.more-general} below). Recall that a nonsingular action $\Lambda \actson (X,\mu)$ is said to be recurrent if there is no Borel subset $\cU \subset X$ such that $\mu(\cU) > 0$ and $\mu(g \cdot \cU \cap \cU) = 0$ for all $g \in \Lambda - \{e\}$.

\begin{theorem}\label{thm.unique-cartan-type-III}
Let $\Gamma = \Gamma_1 *_\Sigma \Gamma_2$ be an amalgamated free product group and assume that there exist $g_1,\ldots,g_n \in \Gamma$ such that $\bigcap_{k=1}^n g_k \Sigma g_k^{-1}$ is finite.

Let $\Gamma \actson (X,\mu)$ be any nonsingular free ergodic action. Assume that both $\Gamma_i$ admit a subgroup $\Lambda_i$ such that the restricted action $\Lambda_i \actson (X,\mu)$ is recurrent and $\Lambda_i \cap \Sigma$ is finite.

Then, $L^\infty(X)$ is the unique Cartan subalgebra of $L^\infty(X) \rtimes \Gamma$ up to unitary conjugacy.
\end{theorem}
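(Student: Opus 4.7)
Following the strategy of \cite{HV12}, I would reduce the type III uniqueness to a tracial statement in the continuous core. Write $B = L^\infty(X)$, $M = B \rtimes \Gamma$, and let $c(M) = M \rtimes_{\sigma^\vphi} \R$ denote the continuous core for a faithful normal state $\vphi$ whose centralizer contains $B$. Then $c(M) = c(B) \rtimes \Gamma$, where $c(B) = B \ovt L(\R) \cong L^\infty(X \times \R)$ carries the Maharam action of $\Gamma$ together with a canonical semifinite trace. The AFP decomposition of $\Gamma$ lifts to
\[
c(M) = (c(B) \rtimes \Gamma_1) *_{c(B) \rtimes \Sigma} (c(B) \rtimes \Gamma_2).
\]
By the descent result of \cite{HV12}, uniqueness of $B$ as a Cartan subalgebra of $M$ follows from the tracial statement that for every finite-trace projection $p \in c(M)$ and every maximal abelian subalgebra $A_0 \subset p c(M) p$ whose normalizer generates $p c(M) p$, one has $A_0 \prec_{c(M)} c(B)$.

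Since $A_0$ is abelian it is amenable relative to $c(B) \rtimes \Gamma_1$, so applying a semifinite corner version of Theorem~\ref{thm.AFP} produces one of three alternatives: (a)~$A_0 \prec_{c(M)} c(B) \rtimes \Sigma$; (b)~$p c(M) p \prec_{c(M)} c(B) \rtimes \Gamma_i$ for some $i$; or (c)~$p c(M) p$ is amenable relative to $c(B) \rtimes \Sigma$. I would rule out (c) exactly as in the proof of Theorem~\ref{thm.Crigid}: the projection $z(\Sigma)$ of \cite{HPV10} is central in $p c(M) p$, and the product formula $z(\Sigma_0) = \prod_k z(g_k \Sigma g_k^{-1}) = z(\Sigma)$ from \cite[Proposition~6]{HPV10}, together with finiteness of $\Sigma_0 = \bigcap_k g_k \Sigma g_k^{-1}$, forces $\Gamma$ itself to be amenable relative to the finite group $\Sigma_0$, hence amenable, contradicting the non-amenability of any AFP meeting the hypotheses. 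I would rule out (b) using $\Lambda_j$ for $j \neq i$: since $\Lambda_j \cap \Gamma_i \subset \Lambda_j \cap \Sigma$ is finite, infinitely many $g \in \Lambda_j$ lie outside $\Gamma_i$, and recurrence of $\Lambda_j \actson (X,\mu)$, i.e.\ conservativity of the Maharam extension $\Lambda_j \actson c(B)$, yields a sequence of unitaries in $c(B) \rtimes \Lambda_j \subset c(M)$ along which the conditional expectation onto $c(B) \rtimes \Gamma_i$ vanishes in $\|\cdot\|_2$, contradicting $p c(M) p \prec c(B) \rtimes \Gamma_i$ via \cite[Theorem~2.1]{Po03}.

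Alternative (a) then holds, and one more application of the $z(\Sigma)$ argument upgrades $A_0 \prec c(B) \rtimes \Sigma$ to $A_0 \prec c(B) \rtimes \Sigma_0$ and, by finiteness of $\Sigma_0$, to $A_0 \prec c(B)$. The descent result of \cite{HV12} then transports this intertwining back to an honest unitary conjugacy of $A$ onto $B$ inside $M$. The main obstacle will be the semifinite book-keeping in ruling out (b): one must cut down so that the finite-trace projections supplied by conservativity of $\Lambda_j \actson c(B)$ sit comparably with the fixed projection $p$, and this is precisely the point at which the recurrence hypothesis cannot be dropped, since a transient $\Lambda_j$-action could otherwise be absorbed into a type~I corner of $c(B) \rtimes \Gamma_i$ and evade the contradiction.
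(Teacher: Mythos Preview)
Your overall strategy matches the paper's, but there is a genuine gap in how you invoke Theorem~\ref{thm.AFP}. That theorem is proved only for \emph{finite} tracial amalgamated free products; the semifinite AFP $c(M)=(c(B)\rtimes\Gamma_1)*_{c(B)\rtimes\Sigma}(c(B)\rtimes\Gamma_2)$ is not covered, and cutting by a finite-trace projection $p$ destroys the AFP structure, so the ``semifinite corner version'' you appeal to is not available here. The paper circumvents this via the comultiplication $\Delta:\cM\to\cM\ovt L(\Gamma)$, $\Delta(bu_g)=bu_g\ot u_g$: it applies Theorem~\ref{thm.AFP} to the genuinely tracial AFP $p\cM p\ovt L(\Gamma)=(p\cM p\ovt L(\Gamma_1))*_{p\cM p\ovt L(\Sigma)}(p\cM p\ovt L(\Gamma_2))$ with the abelian subalgebra $\Delta(A)$, whose normalizer contains $\Delta(p\cM p)$. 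The two bad alternatives are ruled out for $\Delta(p\cM p)$ (Statements~1 and~2 in the proof), and the surviving conclusion $\Delta(A)\prec p\cM p\ovt L(\Sigma)$ is then translated back to $z(\Sigma)\neq 0$ via a semifinite version of the \cite{HPV10} machinery.

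Your treatment of the individual alternatives also needs repair. For~(c), the projections $z(\Sigma)$ of \cite{HPV10} encode finite-index intertwining bimodules, not relative amenability, so the product formula has no bearing on~(c); the paper instead uses the recurrence of $\Lambda_i$ to manufacture unitaries $u_i,v_i\in p(c(B)\rtimes\Gamma_i)p$ with $E_{c(B)\rtimes\Sigma}$ vanishing on $u_i,v_i,u_i^*v_i$, and runs a ping-pong argument against any putative central state. For~(b), unitaries in $c(B)\rtimes\Lambda_j$ only witness $c(B)\rtimes\Lambda_j\not\prec c(B)\rtimes\Gamma_i$, which is not the same as $p\cM p\not\prec c(B)\rtimes\Gamma_i$; the paper uses the alternating products $(u_1u_2)^n$ and the word-length filtration on $\Gamma$ instead. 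Your upgrade from $A_0\prec c(B)\rtimes\Sigma$ to $A_0\prec c(B)$ is correct in outline, but the paper must first establish $z(\Sigma)=z(\Gamma_1)=z(\Gamma_2)$ (hence $\Gamma$-invariance of $z(\Sigma)$), which again uses the recurrence unitaries together with \cite[Theorem~2.4]{CH08}, before the product formula can be applied.
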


For Theorem \ref{thm.unique-cartan-type-III} to hold, it is essential to impose some recurrence of $\Gamma_i \actson (X,\mu)$ relative to $\Sigma$. Indeed, otherwise the action $\Gamma \actson (X,\mu)$ could simply be the induction of an action $\Gamma_i \actson (Z,\eta)$ so that $L^\infty(X) \rtimes \Gamma \cong B(H) \ovt (L^\infty(Z) \rtimes \Gamma_i)$ and we cannot expect uniqueness of the Cartan subalgebra.

Before proving Theorem \ref{thm.unique-cartan-type-III}, we provide a semifinite variant of the machinery developed in \cite[Sections 4 and 5]{HPV10}. We start by the following elementary lemma and leave the proof to the reader.

\begin{lemma}\label{lem.dim}
Let $(N,\Tr)$ be a von Neumann algebra equipped with a normal semifinite faithful trace. Let $H$ be a right Hilbert $N$-module and $p \in N$ a projection.  We consider dimensions using the trace $\Tr$ and its restrictions to subalgebras of $N$ and $pNp$.
\begin{enumerate}
\item We have $\dim_{pNp}(Hp) \leq \dim_N(H)$.
\item Let $K \subset Hp$ be a closed $pNp$-submodule. Then $\dim_N(\closure{KN}) = \dim_{pNp}(K)$.
\item Let $P \subset N$ be a von Neumann subalgebra such that $\Tr_{|P}$ is semifinite. Let $K \subset H$ be a closed $P$-submodule. Then $\dim_N(\closure{KN}) \leq \dim_P(K)$.
\end{enumerate}
\end{lemma}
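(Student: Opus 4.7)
The plan is to reduce each of the three statements to a cyclic case and then analyze the cyclic case via a support-projection argument. Recall that any closed right $A$-submodule of $H$ decomposes as an orthogonal direct sum of cyclic submodules (by Zorn). For a single vector $\xi\in H$ and any von Neumann subalgebra $A$ with $\Tr_{|A}$ semifinite, let $q_A(\xi)\in A$ denote the smallest projection satisfying $\xi\,q_A(\xi)=\xi$. The standard identification of cyclic modules (via the polar decomposition of the map $L^2(A)\recht H$, $a\mapsto\xi a$) yields an isomorphism of right $A$-modules $\overline{\xi A}\cong q_A(\xi)L^2(A)$, and hence $\dim_A(\overline{\xi A})=\Tr(q_A(\xi))$.

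For part (2), consider first the cyclic case $K=\overline{\xi\cdot pNp}$ with $\xi=\xi p\in Hp$. Since $\xi p=\xi$ we have $q_N(\xi)\leq p$, hence $q_N(\xi)\in pNp$; a minimality argument then yields $q_{pNp}(\xi)=q_N(\xi)$, so that both $\dim_{pNp}(K)$ and $\dim_N(\overline{\xi N})$ equal $\Tr(q_N(\xi))$. For general $K=\overline{\bigoplus_\alpha K_\alpha}$, subadditivity of $\dim_N$ combined with the cyclic case gives
$$\dim_N(\closure{KN})\;\leq\;\sum_\alpha\dim_N(\closure{K_\alpha N})\;=\;\sum_\alpha\dim_{pNp}(K_\alpha)\;=\;\dim_{pNp}(K).$$

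Part (1) then follows by applying the preceding inequality with $K:=Hp$, viewed as a $pNp$-submodule of itself: since $\closure{Hp\cdot N}$ is a right $N$-submodule of $H$, we obtain $\dim_{pNp}(Hp)\leq\dim_N(\closure{HpN})\leq\dim_N(H)$. There is no circularity, because the $\leq$ direction of (2) used only the cyclic case. The reverse inequality in (2) in turn follows from (1): a direct check on generators (using $\xi=\xi p$ for $\xi\in K$) shows $\closure{KN}\cdot p=K$, so applying (1) to the right $N$-module $F:=\closure{KN}$ yields $\dim_{pNp}(K)=\dim_{pNp}(Fp)\leq\dim_N(F)$.

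For part (3), the same cyclic reduction applies. In the cyclic case $K=\overline{\xi P}$, the identity $\xi\,q_P(\xi)=\xi$ holds inside $N$ as well, so by minimality $q_N(\xi)\leq q_P(\xi)$ in $N$. Since $\Tr_{|P}$ is the restriction of $\Tr$, this gives $\Tr(q_N(\xi))\leq\Tr(q_P(\xi))=\Tr_{|P}(q_P(\xi))$, i.e., $\dim_N(\overline{\xi N})\leq\dim_P(\overline{\xi P})$. Summing over a cyclic decomposition $K=\overline{\bigoplus_\alpha\overline{\xi_\alpha P}}$ and using subadditivity of $\dim_N$ yields (3). The main technical point throughout is the cyclic identification $\overline{\xi A}\cong q_A(\xi)L^2(A)$; this is a routine piece of semifinite module theory but does require $\Tr_{|A}$ to be semifinite so that $L^2(A)$ is available.
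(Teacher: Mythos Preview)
The paper leaves this lemma to the reader, so there is no proof to compare against. Your arguments for part (3) and for the inequality $\dim_N(\closure{KN})\leq\dim_{pNp}(K)$ in part (2) are correct. However, your derivation of part (1) contains a genuine error.

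You claim that (1) follows ``by applying the preceding inequality with $K:=Hp$'', and then write $\dim_{pNp}(Hp)\leq\dim_N(\closure{HpN})$. But the displayed inequality you actually established is $\dim_N(\closure{KN})\leq\dim_{pNp}(K)$; specializing to $K=Hp$ gives $\dim_N(\closure{HpN})\leq\dim_{pNp}(Hp)$, the \emph{reverse} of what you wrote. So (1) does not follow, and since your proof of the remaining inequality in (2) invokes (1), that direction of (2) is also left open.

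The cyclic support-projection computation alone is not enough for (1): if you decompose $Hp$ into cyclic $pNp$-modules $\overline{\xi_\alpha\cdot pNp}$, the corresponding $N$-modules $\overline{\xi_\alpha N}$ need not be orthogonal in $H$, so you only get $\dim_{pNp}(Hp)\geq\dim_N(\closure{HpN})$. A correct route is to decompose $H$ (not $Hp$) as a right $N$-module, $H\cong\bigoplus_\beta q_\beta L^2(N)$ with $\dim_N(H)=\sum_\beta\Tr(q_\beta)$, so that $Hp\cong\bigoplus_\beta q_\beta L^2(N)p$, and then prove the elementary identity $\dim_{pNp}\bigl(qL^2(N)p\bigr)=\Tr(qz)\leq\Tr(q)$, where $z$ is the central support of $p$. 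For this, choose partial isometries $v_j\in N$ with $v_j^*v_j\leq p$, the $v_jv_j^*$ mutually orthogonal, and $\sum_j v_jv_j^*=z$; then $\xi\mapsto(v_j^*\xi)_j$ is a right-$pNp$-linear unitary from $L^2(N)p$ onto $\bigoplus_j (v_j^*v_j)L^2(pNp)$, under which left multiplication by $q$ corresponds to the matrix $(v_j^*qv_k)_{j,k}$, whose trace is $\sum_j\Tr(qv_jv_j^*)=\Tr(qz)$. Once (1) is in place, your argument for the reverse inequality in (2) via $\closure{KN}\cdot p=K$ is fine.
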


Assume that $\Gamma$ is a countable group and $\Gamma \actson (B,\Tr)$ a trace preserving action on a von Neumann algebra $B$ equipped with a normal semifinite faithful trace $\Tr$. Denote $\cM = B \rtimes \Gamma$ and use the canonical trace $\Tr$ on $\cM$. Let $p \in \cM$ be a projection with $\Tr(p) < \infty$ and $A \subset p\cM p$ a von Neumann subalgebra with $\cN_{p\cM p}(A)\dpr = p\cM p$. Whenever $\Lambda < \Gamma$ is a subgroup, we consider
\begin{equation*}
\cE_\Lambda = \Bigl\{H \Bigm| \text{$H$ is an $A$-$(B \rtimes \Lambda)$-subbimodule of $L^2(p\cM)$ with $\dim_{B \rtimes \Lambda}(H) < \infty$} \Bigr\} \; .
\end{equation*}
If $H \in \cE_\Lambda$, $u \in \cN_{p\cM p}(A)$ and $v \in \cU(B \rtimes \Lambda)$, we have that $u H v$ again belongs to $\cE_\Lambda$. So the closed linear span of all $H \in \cE_\Lambda$ is of the form $L^2(p \cM z(\Lambda))$, where $z(\Lambda)$ is a projection in $\cM \cap (B \rtimes \Lambda)'$. We make $z(\Lambda)$ uniquely determined by requiring that $z(\Lambda)$ is smaller than or equal to the central support of $p$ in $\cM$.

If $\Lambda < \Lambda' < \Gamma$ are subgroups, we have $z(\Lambda) \leq z(\Lambda')$. Indeed, whenever $H \subset L^2(p \cM)$ is an $A$-$(B \rtimes \Lambda)$-subbimodule with $\dim_{B \rtimes \Lambda}(H) < \infty$, we define $K$ as the closed linear span of $H(B \rtimes \Lambda')$. By Lemma \ref{lem.dim}, we get that $\dim_{B \rtimes \Lambda'}(K) < \infty$. Since $H \subset K$ and since this works for all choices of $H$, we conclude that $z(\Lambda) \leq z(\Lambda')$.

The basic construction $\langle \cM, e_{B \rtimes \Lambda} \rangle$ carries a natural semifinite trace $\Tr$ satisfying $\Tr(x e_{B \rtimes \Lambda} x^*) = \Tr(xx^*)$ for all $x \in \cM$.
The projections $e \in A' \cap p \langle \cM,e_{B \rtimes \Lambda} \rangle p$ are precisely the orthogonal projections onto the $A$-$(B \rtimes \Lambda)$-subbimodules $H \subset L^2(p \cM)$. Moreover under this correspondence, we have $\Tr(e) = \dim_{B \rtimes \Lambda}(H)$. We also have the canonical operator valued weight $\cT_\Lambda$ from $\langle \cM, e_{B \rtimes \Lambda} \rangle^+$ to the extended positive part of $\cM$ such that $\Tr = \Tr \circ \cT_\Lambda$. Using the anti-unitary involution $J : L^2(\cM) \recht L^2(\cM) : J (x) = x^*$, we can therefore alternatively define $z(\Lambda)$ as
\begin{align*}
p \, Jz(\Lambda) J & = \bigvee \Bigl\{ e \Bigm| e \in A' \cap p \langle \cM,e_{B \rtimes \Lambda} \rangle p \;\;\text{is a projection with}\;\; \|\cT_\Lambda(e)\| < \infty \Bigr\} \\
& = \bigvee \Bigl\{ \supp(a) \Bigm| a \in A' \cap p \langle \cM,e_{B \rtimes \Lambda} \rangle^+ p \;\;\text{and}\;\; \|\cT_\Lambda(a)\| < \infty \Bigr\} \; .
\end{align*}
If now $\Lambda < \Gamma$ and $\Lambda' < \Gamma$ are subgroups, we can literally repeat the proof of \cite[Proposition 6]{HPV10} and conclude that $z(\Lambda)$ and $z(\Lambda')$ commute with
\begin{equation}\label{eq.intersect}
z(\Lambda \cap \Lambda') = z(\Lambda) \, z(\Lambda') \; .
\end{equation}

We are now ready to prove Theorem \ref{thm.unique-cartan-type-III}.

\begin{proof}[Proof of Theorem \ref{thm.unique-cartan-type-III}]
Denote by $\om : \Gamma \times X \recht \R$ the logarithm of the Radon-Nikodym cocycle. Put $Y = X \times \R$ and equip $Y$ with the measure $m$ given by $dm = d\mu \times \exp(t) dt$, so that the action $\Gamma \actson Y$ given by $g \cdot (x,t) = (g \cdot x, \om(g,x)+t)$ is measure preserving (see \cite{Ma63}). The restricted actions $\Lambda_i \actson (Y,m)$ are still recurrent.

Put $B = L^\infty(Y)$ and denote by $\Tr$ the canonical semifinite trace on $\cM = B \rtimes \Gamma$, given by the infinite invariant measure $m$. Choose a projection $p \in B$ with $0 < \Tr(p) < \infty$. Put $\Sigma_i = \Lambda_i \cap \Sigma$. Since $\Sigma_i$ is a finite group, the von Neumann algebra $p (B \rtimes \Sigma_i) p$ is of type I. Since the action $\Lambda_i \actson (Y,m)$ is recurrent, the von Neumann algebra $p (B \rtimes \Lambda_i) p$ is of type II$_1$. In particular, the inclusion $p (B \rtimes \Sigma_i) p \subset p(B \rtimes \Lambda_i)p$ has no trivial corner in the sense of \cite[Definition 5.1]{HV12} and it follows from \cite[Lemma 5.4]{HV12} that there exists a unitary $u_i \in p(B \rtimes \Lambda_i)p$ such that $E_{p (B \rtimes \Sigma_i) p}(u_i^n) = 0$ for all $n \in \Z - \{0\}$. Since $\Lambda_i \cap \Sigma = \Sigma_i$, we have that $E_{p(B \rtimes \Sigma)p}(x) = E_{p (B \rtimes \Sigma_i) p}(x)$ for all $x \in p(B \rtimes \Lambda_i)p$. So, we get that $E_{p(B \rtimes \Sigma)p}(u_i^n) = 0$ for all $n \in \Z - \{0\}$. We put $v_i = u_i^*$ and have found unitaries $u_i,v_i \in p(B \rtimes \Gamma_i)p$ satisfying
\begin{equation}\label{eq.ui}
E_{p(B \rtimes \Sigma)p}(u_i) = E_{p(B \rtimes \Sigma)p}(v_i) = E_{p(B \rtimes \Sigma)p}(u_i^* v_i) = 0 \; .
\end{equation}

Define the normal trace preserving $*$-homomorphism
$$\Delta : \cM \recht \cM \ovt L(\Gamma) : \Delta(b u_g) = b u_g \ot u_g \quad\text{for all}\;\; b \in B, g \in \Gamma \; .$$
We use the unitaries $u_i$ satisfying \eqref{eq.ui} to prove the following two easy statements.

{\bf Statement 1.} {\it For every $i = 1,2$, we have that $\Delta(p \cM p) \not\prec_{p \cM p \ovt L(\Gamma)} p \cM p \ovt L(\Gamma_i)$.}

{\bf Statement 2.} {\it The von Neumann subalgebra $\Delta(p \cM p) \subset p \cM p \ovt L(\Gamma)$ is not amenable relative to $p \cM p \ovt L(\Sigma)$.}

{\bf Proof of statement 1.} Denote by $|g|$ the length of an element $g \in \Gamma$, i.e.\ the minimal number of factors that is needed to write $g$ as a product of elements in $\Gamma_1$, $\Gamma_2$, with the convention that $|g|=0$ if and only if $g \in \Sigma$. Denote by $Q_m$ the orthogonal projection of $L^2(p \cM p)$ onto the closed linear span of $\{p b u_g p \mid b \in B, g \in \Gamma, |g| \leq m\}$. Denote by $P_m$ the orthogonal projection of $\ell^2(\Gamma)$ onto the closed linear span of $\{u_g \mid g \in \Gamma, |g| \leq m\}$. A direct computation yields
$$(1 \ot P_m)(\Delta(x)) = \Delta(Q_m(x)) \quad\text{for all}\;\; x \in p \cM p \; .$$
Define the unitary $w_n = (u_1 u_2)^n$. Since $Q_m(w_n) = 0$ whenever $n > m/2$, we have that $(1 \ot P_m)(\Delta(w_n)) = 0$ for all $n > m/2$. It follows in particular that for all $g, h \in \Gamma$,
$$E_{p \cM p \ovt L(\Gamma_i)}((1 \ot u_g) \Delta(w_n) (1 \ot u_h)) = 0 \quad\text{whenever}\;\; n > (|g|+|h|+1)/2 \; .$$
So, for every $x,y \in p\cM p \ovt L(\Gamma)$, we get that $\lim_n \|E_{p \cM p \ovt L(\Gamma_i)}(x \Delta(w_n) y)\|_2 = 0$. Hence, $\Delta(p \cM p) \not\prec p \cM p \ovt L(\Gamma_i)$ and statement~1 is proven.

{\bf Proof of statement 2.} Assume that $\Delta(p \cM p)$ is amenable relative $p \cM p \ovt L(\Sigma)$. So we find a positive $\Delta(p\cM p)$-central functional $\Omega$ on $\langle p \cM p \ovt L(\Gamma), e_{p \cM p \ovt L (\Sigma)}\rangle$ such that $\Omega(x) = (\Tr \ot \tau)(x)$ for all $x \in p\cM p \ovt L(\Gamma)$. Note that we can identify
$$\langle p\cM p \ovt L(\Gamma), e_{p\cM p \ovt L(\Sigma)} \rangle = p \cM p \ovt \langle L(\Gamma), e_{L(\Sigma)} \rangle = (p \ot 1) \langle \cM \ovt L(\Gamma), \cM \ovt L(\Sigma)\rangle (p \ot 1) \; .$$
Since $E_{\cM \ovt L(\Sigma)} \circ \Delta = \Delta \circ E_{B \rtimes \Sigma}$ and since the closed linear span of $\Delta(\cM) L^2(\cM \ovt L (\Sigma))$ equals $L^2(\cM \ovt L (\Gamma))$, there is a unique normal unital $*$-homomorphism
$$\Psi : \langle \cM, e_{B \rtimes \Sigma} \rangle \recht \langle \cM \ovt L(\Gamma), e_{\cM \ovt L(\Sigma)} \rangle : \Psi(x e_{B \rtimes \Sigma} y) = \Delta(x) e_{\cM \ovt L(\Sigma)} \Delta(y) \quad\text{for all}\;\; x,y \in \cM \; .$$
The composition of $\Omega$ and $\Psi$ yields a $p\cM p$-central positive functional $\Omega_0$ on $p \langle \cM , e_{B \rtimes \Sigma} \rangle p$ satisfying $\Omega_0(p) = \Tr(p)$. Note that we can view $p \langle \cM , e_{B \rtimes \Sigma} \rangle p$ as the commutant of the right action of $B \rtimes \Sigma$ on $p L^2(\cM)$.

Denote by $H_i \subset p L^2(\cM)$ the closed linear span of all $p b u_g$ with $b \in B$ and $g \in \Gamma$ such that a reduced expression of $\Gamma$ as an alternating product of elements in $\Gamma_1 - \Sigma$ and $\Gamma_2 - \Sigma$ starts with a factor in $\Gamma_i - \Sigma$. Denote $H_0 = p L^2(B)$. So we have the orthogonal decomposition $p L^2(\cM) = H_0 \oplus H_1 \oplus H_2$. Denote by $e_i : p L^2(\cM) \recht H_i$ the orthogonal projection. Note that $e_i$ is a projection in $p \langle \cM , e_{B \rtimes \Sigma} \rangle p$. By \eqref{eq.ui}, the projections $u_2 (e_0 + e_1) u_2^*$ and $v_2 (e_0 + e_1) v_2^*$ are orthogonal and lie under $e_2$. Since $\Omega_0$ is $p\cM p$-central, it follows that
$$2 \Omega_0(e_0 + e_1) \leq \Omega_0(e_2) \; .$$
It similarly follows that $2 \Omega_0(e_2) \leq \Omega_0(e_1)$. Together, it follows that $\Omega_0(e_0 + e_1) = \Omega_0(e_2) = 0$. Since $e_0 + e_1 + e_2 = p$, we obtain the contradiction that $\Omega_0(p) = 0$. So also statement~2 is proven.

Assume now that $L^\infty(X) \rtimes \Gamma$ admits a Cartan subalgebra that is not unitarily conjugate to $L^\infty(X)$. The first paragraphs of the proof of \cite[Theorem A]{HV12} are entirely general and yield an abelian von Neumann subalgebra $A \subset p\cM p$ such that $\cN_{p\cM p}(A)\dpr = p \cM p$ and $A \not\prec Bq$ whenever $q \in B$ is a projection with $\Tr(q) < \infty$. So to prove the theorem, we fix an abelian von Neumann subalgebra $A \subset p \cM p$ with $\cN_{p\cM p}(A)\dpr = p \cM p$. We have to find a projection $q \in B$ with $\Tr(q) < \infty$ and $A \prec Bq$.

Note that $\Delta(A) \subset p\cM p \ovt L(\Gamma)$ is an abelian, hence amenable, von Neumann subalgebra whose normalizer contains $\Delta(p \cM p)$. We view $p \cM p \ovt L(\Gamma)$ as the amalgamated free product of $p \cM p \ovt L(\Gamma_1)$ and $p \cM p \ovt L(\Gamma_2)$ over their common von Neumann subalgebra $p \cM p \ovt L(\Sigma)$. A combination of Theorem \ref{thm.AFP} and statements 1 and 2 above implies that $\Delta(A) \prec p\cM p \ovt L(\Sigma)$. So there is no sequence of unitaries $(w_n)$ in $\cU(A)$ satisfying $\lim_n \|E_{p \cM p \ovt L(\Sigma)}(x \Delta(w_n) y)\|_2 = 0$ for all $x, y \in p\cM p \ovt L(\Gamma)$. This means that we can find $\eps > 0$ and $h_1,\ldots,h_m \in \Gamma$ such that
\begin{equation}\label{eq.ineq}
\sum_{i,j = 1}^m \|E_{p\cM p \ovt L(\Sigma)}((1 \ot u_{h_i}^*)\Delta(a)(1 \ot u_{h_j}))\|_2^2 \geq \eps \quad\text{for all}\;\; a \in \cU(A) \; .
\end{equation}
Define the positive element $T = \sum_{i=1}^m p u_{h_i} e_{B \rtimes \Sigma} u_{h_i}^* p$ in $p \langle \cM , e_{B \rtimes \Sigma} \rangle p$. The left hand side of \eqref{eq.ineq} equals $\Tr(T a T a^*)$. Denote by $S$ the element of smallest $\|\,\cdot\,\|_{2,\Tr}$-norm in the weakly closed convex hull of $\{a T a^* \mid a \in \cU(A)\}$. Then $S$ is a nonzero element of $A' \cap p \langle \cM,e_{B \rtimes \Sigma} \rangle p$ and $\Tr(S) < \infty$. Using the notation introduced before this proof, this means that $z(\Sigma) \neq 0$.

Since the action $\Gamma \actson Y$ is free, we have $\cM \cap B' = B$. So the projections $z(\Sigma)$ and $z(\Gamma_i)$ belong to $B$ and are, respectively, $\Sigma$- and $\Gamma_i$-invariant. We prove below that $z(\Sigma)$ is a $\Gamma$-invariant projection in $B$. We prove this by showing that $z(\Gamma_1) = z(\Sigma) = z(\Gamma_2)$.

Since $\Sigma < \Gamma_i$, we have that $z(\Sigma) \leq z(\Gamma_i)$ for every $i = 1,2$. We claim that the equality holds. Assume that $z(\Sigma) < z(\Gamma_1)$. Note that both projections belong to $B$. Choose a nonzero projection $q \in B$ with $\Tr(q) < \infty$ and $q \leq z(\Gamma_1) - z(\Sigma)$. Choose $H \in \cE_{\Gamma_1}$ such that $H q \neq \{0\}$. By Lemma \ref{lem.dim}, we have
$$\dim_{q(B \rtimes \Gamma_1)q}(H q) \leq \dim_{B \rtimes \Gamma_1}(H) < \infty \; .$$
We conclude that $L^2(p \cM q)$ admits a nonzero $A$-$q(B \rtimes \Gamma_1)q$-subbimodule $K$ that is finitely generated as a right Hilbert module. Since $q \perp z(\Sigma)$, we also know that $L^2(p \cM q)$ does not admit an $A$-$q(B \rtimes \Sigma)q$-subbimodule that is finitely generated as a right Hilbert module. We then encode $K$ as an integer $n$, a projection $q_1 \in M_n(\C) \ot q(B \rtimes \Gamma_1)q$, a nonzero partial isometry $V \in p(M_{1,n}(\C) \ot \cM)q_1$ and a normal unital $*$-homomorphism $\vphi : A \recht q_1(M_n(\C) \ot (B \rtimes \Gamma_1))q_1$ such that
\begin{equation}\label{eq.wehave}
a V = V \vphi(a) \;\;\text{for all}\;\; a \in A \quad\text{and}\quad \vphi(A) \not\prec_{M_n(\C) \ot q(B \rtimes \Gamma_1)q} q(B \rtimes \Sigma)q \; .
\end{equation}
Let $u \in \cN_{p\cM p}(A)$ and write $u a u^* = \al(a)$ for all $a \in A$. Then $V^* u V$ is an element of $q_1(M_n(\C) \ot \cM) q_1$ satisfying
$$V^* u V \; \vphi(a) = \vphi(\alpha(a)) \; V^* u V \quad\text{for all}\;\; a \in A \; .$$
By \eqref{eq.wehave} and \cite[Theorem 2.4]{CH08}, it follows that $V^* u V \in q_1(M_n(\C) \ot (B \rtimes \Gamma_1))q_1$. This holds for all $u \in \cN_{p\cM p}(A)$. Since the linear span of $\cN_{p\cM p}(A)$ is strongly dense in $p \cM p$, and writing $q_2 = V^* V$, we have found a nonzero projection $q_2 \in M_n(\C) \ot (B \rtimes \Gamma_1)$ with the property that
$$q_2(M_n(\C) \ot \cM)q_2 = q_2(M_n(\C) \ot (B \rtimes \Gamma_1))q_2 \; .$$
In the von Neumann algebra $M_n(\C) \ot (B \rtimes \Gamma_1)$, the projection $q_2$ is equivalent with a projection in $D_n(\C) \ot B$, where $D_n(\C) \subset M_n(\C)$ is the diagonal subalgebra. So, we find a nonzero projection $q_3 \in B$ satisfying $q_3 \cM q_3 = q_3 (B \rtimes \Gamma_1) q_3$. As in \eqref{eq.ui}, there however exists a unitary $v \in q_3(B \rtimes \Gamma_2)q_3$ with the property that $E_{q_3(B \rtimes \Sigma)q_3}(v) = 0$. It follows that $v$ belongs to $q_3 \cM q_3$, but is orthogonal to $q_3 (B \rtimes \Gamma_1) q_3$. We have reached a contradiction and conclude that $z(\Sigma) = z(\Gamma_1)$. By symmetry, we also have that $z(\Sigma) = z(\Gamma_2)$.

Since $z(\Gamma_i)$ is a $\Gamma_i$-invariant projection in $B$, we conclude that $z(\Sigma)$ is a nonzero $\Gamma$-invariant projection in $B$.
Take now $g_1,\ldots,g_n \in \Gamma$ such that $\Sigma_0 = \bigcap_{k=1}^n g_k \Sigma g_k^{-1}$ is finite. By definition, we have $z(g_k \Sigma g_k^{-1}) = \sigma_{g_k}(z(\Sigma))$. Since $z(\Sigma)$ is $\Gamma$-invariant, it follows that $z(g_k \Sigma g_k^{-1}) = z(\Sigma)$ for every $k$. Using \eqref{eq.intersect}, we conclude that $z(\Sigma) = z(\Sigma_0)$. In particular, $z(\Sigma_0) \neq 0$. So we find a nonzero $A$-$(B \rtimes \Sigma_0)$-subbimodule $H$ of $L^2(p \cM)$ with $\dim_{B \rtimes \Sigma_0}(H) < \infty$. A fortiori, $H$ is an $A$-$B$-bimodule. Since $\Sigma_0$ is finite, also $\dim_B(H) < \infty$. Taking a projection $q \in B$ with $\Tr(q) < \infty$ and $H q \neq \{0\}$, it follows from Lemma \ref{lem.dim} that we have found a nonzero $A$-$Bq$-subbimodule of $L^2(p\cM q)$ having finite right dimension. This precisely means that $A \prec Bq$ and hence, ends the proof of the theorem.
\end{proof}

We can now deduce Proposition \ref{prop.Wstar-superrigid}.

\begin{proof}[Proof of Proposition \ref{prop.Wstar-superrigid}]
Write $X = \R^5/\R_+ \times [0,1]^\Gamma$ and $Y = \R^5 \times [0,1]^\Gamma$. Put $G = \Gamma \times \R_+$ and consider the action $G \actson Y$ given by
$$(g,\alpha) \cdot (x,y) = (\alpha \pi(g) \cdot x, g \cdot y) \quad\text{for all}\;\; g \in \Gamma, \al \in \R_+, x \in \R^5, y \in [0,1]^\Gamma \; .$$
Note that the restricted action $\Gamma \actson Y$ is infinite measure preserving and can be identified with the Maharam extension of $\Gamma \actson X$. Since the Bernoulli action $\Gamma \actson [0,1]^\Gamma$ is mixing, we use throughout the proof that the restriction of $\Gamma \actson Y$ to a subgroup $\Lambda < \Gamma$ is ergodic whenever $\pi(\Lambda)$ acts ergodically on $\R^5$ (see e.g.\ \cite[Proposition 2.3]{Sc82}). Using \cite[Lemma 5.6]{PV08}, we find in particular that $\Gamma \actson Y$ is ergodic, meaning that $\Gamma \actson X$ is of type III$_1$.

Let $\cG$ be a Polish group in Popa's class $\cU_\text{fin}$, i.e.\ a closed subgroup of the unitary group of a II$_1$ factor, e.g.\ any countable group.
We claim that every measurable $1$-cocycle $\om : G \times Y \recht \cG$ is cohomologous to a continuous group homomorphism $G \recht \cG$. As explained in detail in \cite[Step 1 of the proof of Theorem 21]{KS12}, it follows from \cite[Theorem 5.3]{PV08} that up to cohomology, we may assume that the restriction of $\om$ to $\SL(5,\Z)$ is a group homomorphism. By \cite[Lemma 5.6]{PV08}, the diagonal action $\SL(3,\Z) \actson \R^3 \times \R^3$ is ergodic. It follows that the diagonal action $\Sigma \actson \R^5 \times \R^5$ is ergodic as well. But then also the diagonal action $\Sigma \actson Y \times Y$ is ergodic. Since the restriction of $\om$ to $\Sigma$ is a homomorphism and since $\Sigma$ commutes with the natural copies of $\Z$ and $\R_+$ inside $G$, it now follows from \cite[Lemma 5.5]{PV08} that $\om$ is also a homomorphism on $\Z$ and on $\R_+$. Because $\SL(5,\Z)$, $\Z$ and $\R_+$ together generate $G$, we have proven the claim that $\om$ is cohomologous to a group homomorphism.

We next prove that $\R_+$ is the only open normal subgroup of $G$ that acts properly on $Y$. Indeed, if $G_0$ is such a subgroup, we first have that $\R_+ \subset G_0$ because $\R_+$ is connected. So $G_0 = \Gamma_0 \times \R_+$ where $\Gamma_0$ is a normal subgroup of $\Gamma$ that acts properly on $Y$. Then $\pi(\Gamma_0)$ is a normal subgroup of $\SL(5,\Z)$. So either $\pi(\Gamma_0) = \{1\}$ or $\pi(\Gamma_0)$ has finite index in $\SL(5,\Z)$. In the latter case, $\Gamma_0$ acts ergodically on $Y$, rather than properly. In the former case, $\Gamma_0$ only acts by the Bernoulli shift and the properness forces $\Gamma_0$ to be finite. But $\Gamma$ is an icc group, so that $\Gamma_0 = \{e\}$.

The cocycle superrigidity of $G \actson Y$, together with the previous paragraph and \cite[Lemma 5.10]{PV08}, now implies that the only actions that are stably orbit equivalent with $\Gamma \actson X$ are the induced $\Gamma'$-actions, given an embedding of $\Gamma$ into $\Gamma'$.

So to conclude the proof, it remains to show that $L^\infty(X) \rtimes \Gamma$ has a unique Cartan subalgebra up to unitary conjugacy. This follows from Theorem \ref{thm.unique-cartan-type-III}, using the subgroups $\SL(2,\Z) < \SL(5,\Z)$ (embedded in the upper left corner) and $\Z < \Sigma \times \Z$ that act recurrently on $X$ and intersect $\Sigma$ trivially.
\end{proof}

\begin{remark}\label{rem.more-general}
In the formulation of Theorem \ref{thm.unique-cartan-type-III}, we required the existence of subgroups $\Lambda_i < \Gamma_i$ that intersect $\Sigma$ finitely and that act in a recurrent way on $(X,\mu)$. It is actually sufficient to impose the following more ergodic theoretic condition. Denote by $\Gamma \actson (Y,m)$ the (infinite measure preserving) Maharam extension of $\Gamma \actson (X,\mu)$. Consider the orbit equivalence relations $\cR(\Gamma_i \actson Y)$ and $\cR(\Sigma \actson Y)$, as well as their restrictions to nonnegligible subsets of $Y$. It is then sufficient to assume that for every Borel set $\cU \subset Y$ with $0 < m(\cU) < \infty$, almost every $\cR(\Gamma_i \actson Y)_{|\cU}$-equivalence class consists of infinitely many $\cR(\Sigma \actson Y)_{|\cU}$-equivalence classes. Indeed, writing $B = L^\infty(Y)$, it then follows from \cite[Lemma 2.6]{IKT08} that for every projection $p \in B$ with $0 < \Tr(B) < \infty$, there exist unitaries $u_i,v_i \in p(B \rtimes \Gamma_i)p$ satisfying \eqref{eq.ui}. So the proof of Theorem \ref{thm.unique-cartan-type-III} goes through.
\end{remark}

\end{document}